\newcommand*\patchAmsMathEnvironmentForLineno[1]{%
  \expandafter\let\csname old#1\expandafter\endcsname\csname #1\endcsname
  \expandafter\let\csname oldend#1\expandafter\endcsname\csname end#1\endcsname
  \renewenvironment{#1}%
     {\linenomath\csname old#1\endcsname}%
     {\csname oldend#1\endcsname\endlinenomath}}% 
\newcommand*\patchBothAmsMathEnvironmentsForLineno[1]{%
  \patchAmsMathEnvironmentForLineno{#1}%
  \patchAmsMathEnvironmentForLineno{#1*}}%
\newtheorem{theorem}{Theorem} %Defines \begin{theorem} to write "Theorem"
\newtheorem{theorem*}{Theorem} %Defines \begin{theorem} to write "Theorem"
\newtheorem{lemma}[theorem]{Lemma}
\newtheorem{conjecture}[theorem]{Conjecture}
\newtheorem{claim}[theorem]{Claim}
\theoremstyle{definition}
\theoremstyle{remark}
\newcounter{arxiv}
\begin{document}
\setcounter{arxiv}{0}
%\setcounter{arxiv}{1}

%\linenumbers  %% This enables line numbers in the whole document.

\title{Inducibility of directed paths}
\author{
Ilkyoo Choi\thanks{
Ilkyoo Choi was supported by Basic Science Research Program through the National Research Foundation of Korea (NRF) funded by the Ministry of Education (NRF-2018R1D1A1B07043049), and also by Hankuk University of Foreign Studies Research Fund.
Department of Mathematics, Hankuk University of Foreign Studies, Yongin-si, Gyeonggi-do, Republic of Korea.
E-mail: \texttt{ilkyoo@hufs.ac.kr}
}\and
Bernard Lidick\'{y}\thanks{Research of this author is supported in part by NSF grants DMS-1600390 and DMS-1855653.
Department of Mathematics, Iowa State University, Ames, IA, USA. 
E-mail: {\tt lidicky@iastate.edu}}
\and
Florian Pfender\thanks{Research of this author is supported in part by NSF grants DMS-1600483 and DMS-1855622.
Department of Mathematical and Statistical Sciences, University of Colorado Denver, Denver, CO, USA. 
E-mail: {\tt Florian.Pfender@ucdenver.edu}
} 
}

\maketitle

\begin{abstract}
A long standing open problem in extremal graph theory is to describe all graphs that maximize the number of induced copies of a path on four vertices.
The character of the problem changes in the setting of oriented graphs, and becomes more tractable.
Here we resolve this problem in the setting of oriented graphs
without transitive triangles. 
\end{abstract}

An oriented graph is a directed graph without $2$-cycles.
In this paper, both undirected graphs and oriented graphs are considered, and the following definitions apply to both classes. 
For a graph $G$, we use $|G|$ to denote the number of vertices of $G$. 
We use $P_n$ to denote the path on $n$ vertices. 
Given 
graphs $G$ and $H$, the \emph{density} of $H$ in $G$, denoted $d_H(G)$, is defined to be
\[
d_H(G) = \frac{\#\text{ of induced copies of } H \text{ in } G}{\binom{|V(G)|}{|V(H)|}}.
\]
Given a fixed graph $H$ and a family $\mathcal G$ of graphs, investigating the maximum or minimum value of $d_H(G)$ over all graphs $G\in \mathcal G$ is an important area of research in extremal graph theory.
This question was formulated by Pippenger and Golumbic~\cite{PippengerGolumbic1975},
where they define the \emph{(maximal) inducibility} of a given graph $H$, denoted $I(H)$, as
\[
I(H) = \lim_{n \to \infty} \max_{|G|=n} d_H(G).
\]
They initiated the study by considering the family of undirected graphs, and they proved that for a graph $H$, the value $\max_{|G|=n} d_H(G)$ is nondecreasing and the limit $I(H)$ always exists.
A natural line of research is to to refine the question by considering an (infinite) family $\mathcal G$ of graphs (instead of the family of all graphs), and define the \emph{(maximal) inducibility of $H$ in $\mathcal{G}$} as 
\[
I(H,\mathcal{G}) = \lim_{n \to \infty} \max_{|G|=n,  G \in \mathcal{G}} d_H(G),
\]
if the limit exists.

Given a graph $H$, natural candidate graphs for maximizing the number of induced copies of $H$ are the iterated balanced blow-ups of $H$:
Partition $n$ vertices into $|V(H)|$ classes of sizes $\lceil\frac{n}{|V(H)|}\rceil$ and $\lfloor\frac{n}{|V(H)|}\rfloor$, corresponding to the vertices of $H$. Add all possible edges between any two classes corresponding to an edge of $H$. Now iterate this process inside each class.
When $H$ has $k$ vertices, a simple calculation shows that a sequence of iterated balanced blow-ups of $H$ gives $I(H) \geq k!/(k^k - k)$.
In the original paper by Pippenger and Golumbic~\cite{PippengerGolumbic1975}, they conjectured that for cycles of length at least $5$, this bound is tight. This is still an open question for almost all values of $k$. A graph $H$ is called a {\em fractalizer} if the iterated balanced blow-ups of $H$ are the only graphs maximizing the number of induced copies of $H$ for every $n$. In particular, for each fractalizer the bound is tight.
Interestingly, Fox, Huang, and Lee~\cite{FoxHuangLeeM,FoxHuangLee} showed that almost all graphs are fractalizers by considering random graphs. 
A similar result was recently published by Yuster~\cite{Yuster2018}.

Recently, investigating the inducibility of small graphs received much attention, thanks to the flag algebra method invented by Razborov~\cite{Razborov:2007}.
With the notable exception of $P_4$, the inducibility of graphs on at most four vertices is well understood, see Even-Zoha and Linial~\cite{bib-evan15}.
For $P_4$, the known best lower bound on $I(P_4)$ is $1173/5824 \approx 0.2014$, provided by a construction from~\cite{bib-evan15}, and the best upper bound $0.204513$, obtained by Vaughan~\cite{flagmatic} using flag algebras.

Inducibility of 5-vertex graphs is also not completely resolved. 
Recently, by proving $I(C_5) = \frac{1}{26}$, Balogh et al.~\cite{BaloghC5} determined that the bound is tight for $C_5$.
Before this result, Hatami et al.~\cite{Hatami:2011} and independently Grzesik~\cite{Grzesik:2011} solved
the Erd\H{o}s pentagon problem, which asks for the 
value of $I(C_5,\mathcal{T})$, where $\mathcal{T}$ is the family of triangle-free graphs.
In \cite{1712.08869}, this last problem is resolved for graphs of all orders.
The main difference between the problems of determining $I(C_5,\mathcal{T})$ and $I(C_5)$ is the extremal construction. 
A balanced blow-up of $C_5$ is the extremal construction when considering triangle-free graphs, and an iterated balanced blow-up of $C_5$ is the extremal construction when there are no restrictions on the graphs under consideration. 
When determining $I(C_5,\mathcal{T})$, the flag algebra method gives the exact upper bound on $I(C_5,\mathcal{T})$.
On the other hand, proving a tight upper bound on $I(C_5)$ by merely using flag algebras appears out of reach, and stability methods are used to improve the bound from flag algebra.

In this paper, we consider inducibility of oriented graphs. 
Hladk\'y, Kr\'a\soft{l}, and Norin~\cite{HKN} announced that $I(\vec{P}_3) = \frac{2}{5}$ and the extremal
construction is an iterated blow-up of $\vec{C}_4$.
We conjecture that this generalizes to longer oriented paths, namely, the number of induced copies of $\vec{P}_k$ is maximized by an iterated blow-up of $\vec{C}_{k+1}$.

\begin{conjecture}\label{conj:iter}
The number of induced copies of $\vec{P}_k$ over all oriented graphs on  $n$ vertices is maximized
by an iterated balanced blow-up of $\vec{C}_{k+1}$. As a consequence, 
\[
I(\vec{P}_k) =
\frac{k!}{(k+1)^{k-1}-1}.
\]
\end{conjecture}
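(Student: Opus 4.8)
The plan is to sandwich $p_k(n)$, the maximum number of induced copies of $\vec P_k$ in an $n$-vertex oriented graph, between the count given by the construction and a stability-based upper bound, and then to bootstrap to the exact extremal structure by induction on $n$. For the lower bound I would compute $p_k$ in an iterated balanced blow-up $B$ of $\vec C_{k+1}$. In a single level of the blow-up, with parts $V_0,\dots,V_k$ indexed cyclically and all arcs pointing $V_i\to V_{i+1}$ (indices mod $k+1$), an induced copy of $\vec P_k$ that meets two or more parts cannot use two vertices of the same part, since those would be twins inside the copy while $\vec P_k$ has no twins for $k\ge 3$; so such a copy uses exactly one vertex from each of some $k$ parts, and every such choice does give a copy because deleting any single vertex of $\vec C_{k+1}$ leaves a directed path. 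With $N_i=|V_i|$ this yields $p_k(B)=e_k(N_0,\dots,N_k)+\sum_i p_k(B[V_i])$, where $e_k$ is the $k$-th elementary symmetric polynomial. By Maclaurin's inequality $e_k$ is maximized, subject to $\sum_i N_i=n$, at $N_i=n/(k+1)$; since the only competing term, roughly $\sum_i N_i^k$, is weighted by the tiny constant $1/((k+1)^{k-1}-1)$, a short convexity check shows the fully balanced iterated blow-up maximizes $p_k$ among all iterated blow-ups of $\vec C_{k+1}$. Solving $M(n)=(k+1)\bigl(n/(k+1)\bigr)^k+(k+1)M\bigl(n/(k+1)\bigr)$ gives $M(n)\sim n^k/((k+1)^{k-1}-1)$, so $d_{\vec P_k}(B)\to k!/((k+1)^{k-1}-1)$, the desired lower bound on $I(\vec P_k)$, matching the announced $I(\vec P_3)=\tfrac25$.

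\textbf{Upper bound and extremal structure.} For the matching upper bound I would run the induction-on-$n$ scheme used to certify fractalizers. Assume $G$ maximizes $p_k$ on $n$ vertices; the recursive form of the count above predicts that $V(G)$ should partition into $k+1$ sets whose pairwise adjacency is exactly that of $\vec C_{k+1}$, each set being again extremal, and once such a partition is in hand induction closes the argument. To force the partition, I would first treat the continuous relaxation: represent $G$ as a positive vertex-weighting of a fixed ``type'' digraph $F$, maximize the resulting polynomial in the weights, and combine the first-order (Lagrange) stationarity conditions with a second-order or local-swap analysis to show that uniformly weighted $\vec C_{k+1}$ is the unique maximizer, so that any extremal blow-up is a blow-up of $\vec C_{k+1}$. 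To pass from blow-ups to an arbitrary near-extremal $G$, I would use flag algebras exactly in the spirit of the $C_5$ results quoted above: a flag-algebra semidefinite bound together with the directed removal lemma should show that any $G$ with $d_{\vec P_k}(G)\ge k!/((k+1)^{k-1}-1)-\varepsilon$ is within edit distance $o(n^2)$ of a blow-up of $\vec C_{k+1}$, and an exact-stability (``bootstrapping'') argument then upgrades the approximate structure to the exact partition.

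\textbf{Main obstacle.} The genuinely hard step — the one this paper sidesteps by restricting to oriented graphs with no transitive triangle — is the unrestricted upper bound. For $k\ge 3$ the extremal blow-up of $\vec C_{k+1}$ is triangle-free, hence already lies in the transitive-triangle-free class, so the restricted theorem certifies the conjectured extremal example; but to conclude over \emph{all} oriented graphs one must further rule out that some oriented graph containing transitive triangles does better. This needs (i) a flag-algebra certificate that is actually tight for $d_{\vec P_k}$ over all oriented graphs — which, as the still-open undirected $P_4$ problem warns and as appears to persist already for $\vec P_4$, cannot be expected in general — and (ii) an exact-stability argument robust to transitive triangles and effective for every $k$. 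I expect any complete proof of the conjecture to concentrate its difficulty precisely in (i)--(ii); absent them, the most this strategy delivers is the conditional statement for transitive-triangle-free oriented graphs, which is what the present paper establishes.
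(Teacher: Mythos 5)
The statement you were asked about is Conjecture~\ref{conj:iter}; the paper does not prove it and does not claim to --- it only proves the restricted $\vec{T}_3$-free result for $\vec{P}_4$ (Theorem~\ref{thm-main}) and the general upper bounds of Lemmas~\ref{cl:PG} and~\ref{lem:KNV}, which give $k!/(k-1)^{k-1}$ and, in the $\vec{T}_3$-free class, $k!/k^{k-1}$, both strictly larger than the conjectured value. Your lower-bound computation is essentially correct: the transversal copies of $\vec{P}_k$ in one level of a blow-up of $\vec{C}_{k+1}$ are counted by $e_k(N_0,\dots,N_k)$, and the recursion $M(n)=(k+1)\bigl(n/(k+1)\bigr)^k+(k+1)M\bigl(n/(k+1)\bigr)$ indeed yields density $k!/\bigl((k+1)^{k-1}-1\bigr)$, consistent with $I(\vec{P}_3)=\tfrac25$ and the paper's stated lower bound $6/31$ for $k=4$. (One small imprecision: two vertices of the same part need not be twins in the iterated blow-up, since they may be joined by an inner edge; you need the slightly stronger observation that $\vec{P}_k$, $k\ge 3$, contains neither a non-adjacent pair with identical neighborhoods nor an adjacent pair whose neighborhoods outside the pair coincide.) But everything beyond the lower bound is a program rather than a proof --- the flag-algebra certificate, the stability statement, and the exact bootstrapping for general $k$ over all oriented graphs are exactly the open content of the conjecture, as you yourself concede, so the proposal does not establish the statement.

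There is also a concrete error in your closing paragraph: the \emph{iterated} blow-up of $\vec{C}_{k+1}$ is \emph{not} $\vec{T}_3$-free. Two adjacent vertices $u\to v$ inside one part $V_i$ (adjacency coming from the inner iterated structure) have a common out-neighbor $w\in V_{i+1}$, and $u\to v\to w$, $u\to w$ is a transitive triangle. Only the single-level blow-up is $\vec{T}_3$-free, which is precisely why the paper states two distinct conjectures with two distinct values: $k!/\bigl((k+1)^{k-1}-1\bigr)$ for all oriented graphs (iterated blow-up, Conjecture~\ref{conj:iter}) versus $k!/(k+1)^{k-1}$ in the $\vec{T}_3$-free class (single blow-up, Conjecture~\ref{conjP4T3}). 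Consequently the restricted theorem does not ``certify the conjectured extremal example'' for Conjecture~\ref{conj:iter}; it addresses a different extremal problem with a strictly smaller answer ($24/125$ versus $24/124$ for $k=4$), and the gap between the two is exactly where the difficulty of the unrestricted conjecture lives.
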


Note that Conjecture~\ref{conj:iter} states that the graph maximizing the number of induced copies of $\vec{P}_{k}$ is the same graph as the graph conjectured to maximize the number of induced copies of $\vec{C}_{k+1}$. 
The statement regarding $I(\vec{C}_5)$ is a consequence of a result by Balogh et al.~\cite{BaloghC5} on $I(C_5)$.
Note that Hu et al.~\cite{orientedC4} resolved $I(\vec{C}_4)$, where the extremal example is an iterated blow-up of $\vec{C}_4$. This last construction is not extremal in the undirected case.

Let $\vec{T}_3$ denote the transitive tournament on three vertices.
Similar to triangle-free graphs in the class of undirected graphs, $\vec{T}_3$-free oriented graphs do not include iterated blow-ups of small graphs.
Therefore, extremal graphs often have simpler structure.
In this vein, we attack Conjecture~\ref{conj:iter} first by considering the same inducibility parameter but for $\vec{T}_3$-free oriented graphs. 
We formulate the following conjecture.

\begin{conjecture}\label{conjP4T3}
The number of induced copies of $\vec{P}_k$ over all $\vec{T}_3$-free oriented graphs on $n$ vertices 
is maximized by a balanced blow-up of $\vec{C}_{k+1}$. As a consequence, 
\[
I(\vec{P}_k, \mathcal{\vec T}) = \frac{k!}{(k+1)^{k-1}},
\]
where $\mathcal{\vec T}$ is the family of $\vec{T}_3$-free oriented graphs. 
\end{conjecture}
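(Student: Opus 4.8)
The plan is to prove the lower bound for every $k$ and the matching upper bound for $k=4$, the case named in the abstract; for general $k$ the upper bound stays as Conjecture~\ref{conjP4T3}. \emph{Lower bound.} Let $B_n$ be the balanced blow-up of $\vec C_{k+1}$ on $n$ vertices, with classes $V_0,\dots,V_k$ and all arcs from $V_i$ to $V_{i+1}$ (indices mod $k+1$). Since $\vec C_{k+1}$ has girth $k+1\ge 4$, it contains no triangle, so $B_n$ is $\vec T_3$-free. In the blow-up of any oriented graph, an induced copy of $\vec P_k$ must be a transversal of the classes: two vertices of a common class are nonadjacent and have identical in-neighborhoods, out-neighborhoods, and arc orientations toward every other vertex, while $\vec P_k$ has no such pair (for a nonadjacent pair $v_i,v_j$ with $i<j$ one has $j\ge i+2$, and then $v_i$ and $v_j$ differ either in a neighbor or in the orientation of an incident arc). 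Conversely, deleting any single vertex of $\vec C_{k+1}$ leaves $\vec P_k$, so each of the $k+1$ choices of omitted class yields $(n/(k+1))^k$ transversals that induce $\vec P_k$, and there are no others; hence $d_{\vec P_k}(B_n)\to k!/(k+1)^{k-1}$.

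\emph{Upper bound for $\vec P_4$.} I would run Razborov's flag algebra method in the theory of $\vec T_3$-free oriented graphs: take all $\vec T_3$-free oriented graphs on five (or six) vertices as the ground set, assemble the semidefinite program maximizing the coefficient of $\vec P_4$ with flag products over the relevant small types, solve it numerically, and round the optimum to an exact rational certificate. By analogy with the triangle-free pentagon problem, where the method already yields the tight bound, I expect this to certify $d_{\vec P_4}(G)\le \tfrac{24}{125}+o(1)$ for every $\vec T_3$-free oriented graph $G$, which together with $B_n$ gives $I(\vec P_4,\mathcal{\vec T})=\tfrac{24}{125}$. Were the raw bound to come out a hair above $\tfrac{24}{125}$, the standard fix is to impose the complementary-slackness equalities forced by the conjectured extremal blow-up of $\vec C_5$ and re-solve.

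\emph{From asymptotics to the exact extremal graph.} To obtain the stronger statement that a balanced blow-up of $\vec C_5$ is the maximizer for every $n$, I would extract stability from the tight certificate: the subgraph densities it pins to $o(1)$ in a near-extremal $G$ should be precisely the oriented graphs on at most five vertices that do not embed into a blow-up of $\vec C_5$ (beginning with $\vec T_3$), and a removal-type argument then shows every near-extremal $G$ agrees with some blow-up of $\vec C_5$ on all but $o(n^2)$ pairs. A local-exchange step finishes: assign each vertex to the part it best fits, estimate the change in the number of induced $\vec P_4$'s under a single reassignment, and show that any arc violating the blow-up pattern, or any class-size imbalance larger than one, can be corrected so as to strictly increase that number; the finitely many small $n$ are checked directly.

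The hard part is this last, exact-stability step rather than the semidefinite computation. One must rule out all competing sparse near-extremal oriented graphs whose $\vec P_4$-density is close to $\tfrac{24}{125}$ — blow-ups of $\vec C_4$ or of oriented paths, $\vec C_5$-blow-ups perturbed by boundedly many vertices, and the like — and prove a sufficiently robust vertex-relocation lemma controlling how far an approximate blow-up can drift while remaining near-optimal. Whether or not the semidefinite bound already equals $\tfrac{24}{125}$ exactly, essentially all of the genuine difficulty is concentrated in this phase.
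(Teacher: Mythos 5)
Your proposal follows essentially the same route as the paper: the balanced blow-up of $\vec C_{k+1}$ gives the lower bound for all $k$, and for $k=4$ the paper likewise combines a flag-algebra upper bound of $\tfrac{24}{125}$, a removal-lemma-based stability argument, and a local-exchange/cloning step to pin down the extremal graph, which is exactly the content of Theorem~\ref{thm-main}; the general-$k$ statement remains a conjecture in both. The one caveat is your claim that the finitely many small $n$ are ``checked directly'': the paper's exact/uniqueness result covers only $n$ sufficiently large or divisible by $5$ (the latter via a blow-up argument), so that part of your plan overreaches slightly relative to what is actually established.
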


\begin{figure}[ht]
\begin{center}
\begin{tabular}{cc}
\includegraphics{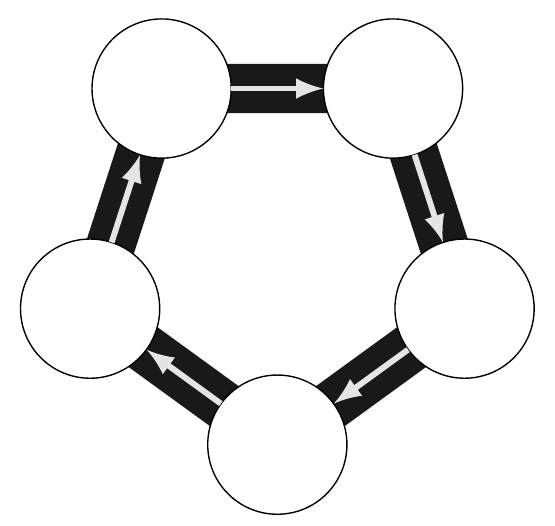} &
\includegraphics[page=2]{fig-construction} \\
(a) & (b)
\end{tabular}
\end{center}
\caption{(a) The blow-up of $\vec{C}_5$ and (b) the iterated blow-up of $\vec{C}_5$.}\label{fig-construction}
\end{figure}

In this paper, we prove Conjecture~\ref{conjP4T3} for $\vec{P}_4$, and we also show the uniqueness of the
extremal construction for sufficiently large graphs.

\begin{theorem}\label{thm-main}
Let $\mathcal{\vec T}$ be the family of oriented graphs without $\vec{T}_3$. 
\[
I(\vec{P}_4, \mathcal{\vec T}) = \frac{24}{125}.
\]
Moreover, for $n$ that is either sufficiently large or divisible by $5$,
the balanced blow-up of $\vec{C}_5$ is the only oriented $n$-vertex graph that maximizes the number of induced copies of $\vec{P}_4$ in $\mathcal{\vec T}$.
\end{theorem}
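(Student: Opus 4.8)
The lower bound is the easy direction: in the balanced blow-up $B_n$ of $\vec C_5$ with parts $V_0,\dots,V_4$, a $4$-set induces a copy of $\vec P_4$ exactly when it meets four cyclically consecutive parts in one vertex each, so $B_n$ has $\sum_{i\in\mathbb Z_5}|V_i|\,|V_{i+1}|\,|V_{i+2}|\,|V_{i+3}|=(1+o(1))\,n^4/125$ induced copies of $\vec P_4$ and hence $I(\vec P_4,\mathcal{\vec T})\ge 24/125$. Everything else is the matching upper bound and the uniqueness statement.

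For the upper bound I would run Razborov's flag algebra method inside the theory of $\vec T_3$-free oriented graphs: take all such oriented graphs on $5$ vertices (and, if the program does not close there, on $6$ vertices) as the admissible types, solve the semidefinite program maximizing the limit density of $\vec P_4$, and round its optimum to an exact rational certificate of the form $\tfrac{24}{125}-d_{\vec P_4}(G)\ge\sum_F c_F\,d_F(G)-o(1)$ obtained by expressing the left side as a sum of squares of flags plus a nonnegative combination $\sum_F c_F\,d_F$ of densities of $\vec T_3$-free oriented graphs $F$. The crucial point — and the reason the oriented problem is ``more tractable'' than the still-open undirected $P_4$ problem — is that this program is expected to be exactly tight, with optimum $24/125$ rather than something larger; producing and verifying such a certificate is the computational heart of the proof, and the whole approach stands or falls on whether it closes at the level of $5$ or $6$ vertices.

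Granting a tight certificate, the uniqueness claim follows the standard stability route. Complementary slackness forces, in every extremal limit (a $\vec T_3$-free oriented graphon, equivalently a limit in the flag algebra), that $d_F=0$ for each $F$ with $c_F>0$ and that the extremal densities lie in the kernels of the positive semidefinite blocks. One arranges the certificate so that the first family of conditions kills essentially the $\le 5$-vertex oriented graphs that do not embed into a blow-up of $\vec C_5$ — in particular $d_{\vec C_3}=0$, so the extremal limit is triangle-free as an oriented graph — while the kernel conditions pin down the local distribution around a typical vertex; together these should identify the blow-up of $\vec C_5$ as the \emph{unique} extremal limit, i.e.\ every near-extremal sequence converges to it. A removal/sampling argument then upgrades this to a finite stability statement: for every $\varepsilon>0$ there are $\delta>0$ and $n_0$ such that any $\vec T_3$-free $n$-vertex $G$ with $n\ge n_0$ and $d_{\vec P_4}(G)\ge 24/125-\delta$ agrees with some balanced blow-up of $\vec C_5$ on all but at most $\varepsilon n^2$ arcs.

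Finally, to obtain the exact result, let $G$ be an $n$-vertex maximizer with $5\mid n$ (or with $n\ge n_0$); since the blow-up construction already gives density $\ge 24/125$, stability places $G$ within $\varepsilon n^2$ arcs of a blow-up of $\vec C_5$ along a partition $V(G)=U_0\cup\dots\cup U_4$. The finishing step is then a discrete local-optimization argument. First, $\vec T_3$-freeness rules out ``misplaced'' arcs between clean-ish vertices: an arc inside some $U_j$, or an arc from $U_{j-1}$ to $U_{j+1}$, or an arc from $U_j$ to $U_{j+2}$, each together with the bulk arcs of the blow-up creates a $\vec T_3$. Using this, one shows that for a fixed vertex $v\in U_j$ the number of induced copies of $\vec P_4$ through $v$ — an explicit polynomial in the sizes $|U_0|,\dots,|U_4|$ once $v$'s attachment is specified — is uniquely maximized when $v$ is \emph{clean}, i.e.\ has all out-arcs into $U_{j+1}$, all in-arcs from $U_{j-1}$, and no other arcs, so a maximizer has only clean vertices and is therefore itself a blow-up of $\vec C_5$. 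Among blow-ups, the number of induced $\vec P_4$'s equals $e_4(|U_0|,\dots,|U_4|)=\sum_i\prod_{k\ne i}|U_k|$, which by Schur-concavity of $e_4$ on the nonnegative orthant is maximized precisely at the balanced part sizes. The main obstacle I anticipate is this last, purely finite part: propagating ``$\varepsilon n^2$-close to a blow-up'' all the way to ``every single vertex is clean'' requires carefully controlling the bounded number of exceptional vertices and bad arcs — so that repairing them neither destroys $\vec T_3$-freeness nor loses induced $\vec P_4$'s — and this bookkeeping, rather than the flag computation, is where the real work lies.
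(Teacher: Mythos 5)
Your outline follows the same route as the paper's proof in all essentials: the lower bound from the balanced blow-up; a flag-algebra certificate in the $\vec{T}_3$-free theory for the asymptotic upper bound (the paper's computation in fact closes already with $4$-vertex flags and two types, so you would not need $5$- or $6$-vertex flags); stability obtained not via graphon/complementary-slackness language but equivalently by extracting from the certificate that a short explicit list of forbidden $4$-vertex configurations and the directed triangle have vanishing density in near-extremal graphs, applying an induced removal lemma for oriented graphs (Aroskar--Cummings), and checking by hand that a $\vec{T}_3$-free graph avoiding that list is exactly a blow-up of $\vec{C}_5$ with nearly balanced parts; and finally a finite cleanup in which misplaced adjacencies are switched away by extremality, exceptional vertices incident to many bad pairs are eliminated by counting the induced copies of $\vec{P}_4$ through them (the paper does this via a small numerical optimization over the possible attachments), and balancedness is forced by a delete-one-vertex-and-clone-another exchange -- precisely your ``every vertex is clean, then optimize the part sizes'' plan, including the Schur-concavity step.

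The one claim your argument does not reach is uniqueness for \emph{every} $n$ divisible by $5$, rather than only for $n\ge n_0$: stability, and all the cleanup built on it, is available only for sufficiently large $n$, so the step ``let $G$ be a maximizer with $5\mid n$ (or $n\ge n_0$); stability places $G$ within $\varepsilon n^2$ arcs of a blow-up'' silently assumes $n$ is large and leaves the small divisible-by-$5$ case unproved. The paper closes this with a separate short idea you would need to add: if $G$ is extremal on $n=5\ell$ vertices, blow up every vertex of $G$ into $j$ independent clones to get $B$ on $5j\ell$ vertices; extremality of $G$ gives $\vec{P}_4(G)\ge \vec{P}_4(\vec{C}_5(n))=5\ell^4$, and each induced $\vec{P}_4$ of $G$ yields $j^4$ induced copies in $B$, so $\vec{P}_4(B)\ge 5j^4\ell^4$; for $j$ large the already-established uniqueness forces $B\cong \vec{C}_5(5j\ell)$, and since $B$ is a blow-up of $G$ this pulls back to $G\cong \vec{C}_5(n)$. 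With that supplement (and, of course, the actual verified flag certificate, which you correctly identify as the computational heart and which the paper provides), your strategy matches the published proof.
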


Our proof uses the flag algebra method. 
The method was developed by Razborov~\cite{Razborov:2007} and it has been successfully applied in various settings, see~\cite{bib-baber11,bib-baber14,bib-balogh17,bib-balogh14,bib-balogh15,bib-coregliano17,bib-das13,bib-falgas15,bib-gethner17,bib-glebov16,bib-goaoc15,bib-hladky17,bib-kral12,bib-kral13,bib-kral14,bib-lidicky17+,bib-cumm13}.
The method has been already described in many previous papers, so we do not describe
it here and merely use it as a black box. 
For an accessible introduction to flag algebras, see~\cite{1505.05200}.

In Section~\ref{sec:stab}, we first show the stability of the extremal construction, and then we obtain the exact result in Section~\ref{sec:exact}.
Utilizing a tool developed by Pippenger and Golumbic~\cite{PippengerGolumbic1975}  and Kr\'a\soft{l}, Norin, and Volec~\cite{KNV} in order to study the value of $I(C_k)$, we discuss upper bounds on $I(\vec{P}_k)$ and $I(\vec{P}_k, \mathcal{\vec T})$, where $\mathcal{\vec T}$ are  $\vec{T}_3$-free oriented graphs, for all $k$, in Section~\ref{longpaths}.

From now on, we will use $\vec{C}_5(n)$ to denote the balanced blow-up of $\vec{C}_5$ on $n$ vertices, see Figure~\ref{fig-construction}(a).

\newcommand{\dP}{d_{\vec{P}_4}}

\section{Stability}\label{sec:stab}

This section is devoted to proving the following stability lemma.
\begin{lemma}\label{lem:stability}
For every $\varepsilon > 0$, 
there exist $n_0$ and $\varepsilon' > 0$ such that 
every oriented $\vec{T}_3$-free graph $G$ of order $n \geq n_0$ with $\dP(G) \geq \frac{24}{125} - \varepsilon'$ is isomorphic to  $\vec{C}_5(n)$ after adding, removing, and/or reorienting at most $\varepsilon n^2$ edges.
\end{lemma}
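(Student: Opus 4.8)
The plan is to combine a flag algebra bound with a stability argument in the spirit of the $I(C_5)$ proof by Balogh et al.\ and the pentagon-problem proofs of Hatami et al.\ and Grzesik. First I would run the flag algebra semidefinite program on $\vec{T}_3$-free oriented graphs (flags on, say, $5$ vertices, with an appropriate type set) to certify that $I(\vec{P}_4,\mathcal{\vec T}) \le \tfrac{24}{125}$. Since $\vec{C}_5(n)$ realizes this bound in the limit, the SDP is tight, and I would extract from the dual certificate the list of subgraphs that must have density zero (or $o(1)$) in any near-extremal sequence: these are exactly the "forbidden" local configurations — $\vec{T}_3$ itself, plus whatever small oriented graphs on $4$ or $5$ vertices the certificate forces to vanish. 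Concretely I expect these to pin down, in the limit, that almost every induced $\vec{P}_4$-count comes from the blow-up of $\vec{C}_5$: the certificate should force the neighborhood structure so that the "link" of a typical vertex looks like four of the five parts of a blown-up $\vec{C}_5$.

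Next I would pass to the limit object: let $(G_n)$ be a sequence with $|G_n|=n\to\infty$ and $\dP(G_n)\to \tfrac{24}{125}$, take a convergent subsequence, and obtain a limit (a $\vec{T}_3$-free oriented graphon / flag-algebra limit $\phi$) achieving $\dP=\tfrac{24}{125}$. Using the zero-densities from the certificate, I would prove a rigidity statement: the only such limit is the balanced blow-up of $\vec{C}_5$. The mechanism is the usual one — the vanishing of the flagged subgraphs forces an equivalence relation on vertices (''same type'') with exactly five classes, each class independent, with all arcs between consecutive classes present and none between non-consecutive classes; the density constraint then forces the classes to be balanced (each of measure $1/5$), since the $\vec{P}_4$-density across a blow-up of $\vec{C}_5$ with part fractions $x_1,\dots,x_5$ is a symmetric function maximized uniquely at $x_i=1/5$ (a short Lagrange/AM–GM computation, which I would only sketch). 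This gives: for every $\delta>0$, there is $n_1$ so that for $n\ge n_1$, $G_n$ is $\delta$-close in edit distance (normalized by $n^2$) to $\vec{C}_5(n)$.

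To turn the limit statement into the finite, quantitative Lemma~\ref{lem:stability} I would argue by contradiction: if the lemma fails for some $\varepsilon>0$, then for every $\varepsilon'>0$ and $n_0$ there is a $\vec{T}_3$-free $G$ on $n\ge n_0$ vertices with $\dP(G)\ge \tfrac{24}{125}-\varepsilon'$ that is more than $\varepsilon n^2$ edits away from $\vec{C}_5(n)$. Choosing $\varepsilon'\to 0$ and $n_0\to\infty$ yields a near-extremal sequence whose limit, by the previous paragraph, must be the balanced blow-up of $\vec{C}_5$ — so a removal/counting lemma (or a direct application of the $\delta$-closeness above with $\delta=\varepsilon$) says $G$ is within $\varepsilon n^2$ edits of $\vec{C}_5(n)$ for $n$ large, a contradiction. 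The main obstacle, and where the real work lies, is the rigidity step: getting the flag algebra certificate to be \emph{exactly} tight at $\tfrac{24}{125}$ with a clean enough set of zero-density subgraphs that the five-part structure is forced, and then handling the ''boundary'' case where a near-extremal graph could a priori have a sixth small part or slightly unbalanced parts — ruling these out requires showing the relevant variational problem has $\vec{C}_5$ as its unique maximizer and that the certificate's slack controls the deviation quadratically. I would expect to need to carefully choose the type and flag set so that the SDP's complementary slackness conditions are unambiguous, possibly supplementing the bare flag algebra output with a short ad hoc argument (as in the $C_5$ papers) to eliminate degenerate near-extremal configurations.
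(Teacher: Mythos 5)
Your overall strategy (flag algebra upper bound, extract configurations of vanishing density, deduce the five-part blow-up structure, then force balancedness) is the right spirit and matches the paper's ingredients, but the way you transfer the limit statement back to the finite graph has a genuine gap. Convergence of a near-extremal sequence in the flag-algebra sense is convergence of subgraph densities, i.e.\ convergence in the cut metric of the associated limit object; it does \emph{not} imply closeness in edit distance, which is what the lemma asserts (at most $\varepsilon n^2$ added/removed/reoriented pairs). A graph can have all the relevant small-subgraph densities within $o(1)$ of those of $\vec{C}_5(n)$ and still differ from it on a constant fraction of pairs (fine, balanced perturbations have small cut norm but large $L^1$/edit distance). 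So your step ``a direct application of the $\delta$-closeness above with $\delta=\varepsilon$ says $G$ is within $\varepsilon n^2$ edits'' does not follow; the parenthetical ``removal/counting lemma'' you mention in passing is in fact the essential missing ingredient, not an optional alternative.

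The paper bridges exactly this gap as follows: the flag algebra computation (their Lemma~\ref{lem:FA}) shows that any $\vec{T}_3$-free graph with $\dP \ge \frac{24}{125}-\varepsilon'$ contains at most $\delta n^4$ induced copies of each graph in an explicit finite list $\mathcal{F}$ of forbidden $4$-vertex configurations and at most $\delta n^3$ directed triangles; then an \emph{induced removal lemma for oriented graphs} (Aroskar--Cummings, their Lemma~\ref{lem:InfRem}) converts ``few induced copies'' into ``edit at most $\varepsilon_R n^2$ pairs to destroy all of them''; then a purely combinatorial claim (their Claim~\ref{clm:blowupc5}) shows that a graph with \emph{no} induced copy of anything in $\mathcal{F}$, no $\vec{T}_3$ and no $\vec{C}_3$ is exactly a (possibly unbalanced) blow-up of $\vec{C}_5$, by fixing one induced $\vec{P}_4$ and classifying every other vertex by its neighborhood on it; finally a counting argument (their Claim~\ref{cl:bal}) shows the parts must be nearly balanced, since $\vec{P}_4(G')$ is essentially $\bigl(\prod_i |X_i|\bigr)\bigl(\sum_i 1/|X_i|\bigr)$, maximized only near $|X_i|=n/5$. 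Your limit-object rigidity step is also left as a sketch (forcing the five-class equivalence relation is precisely the content of their Claim~\ref{clm:blowupc5}, and doing it in the limit is no easier than doing it for a finite graph after removal), so to repair your argument you would in any case need to make the removal lemma plus the exact structural classification the centerpiece rather than an afterthought.
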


Our main tools to prove Lemma~\ref{lem:stability} are flag algebras and a removal lemma. 
We use the following removal lemma, which follows from a more general theorem by Aroskar and Cummings~\cite{AroskarCummings}.

\begin{lemma}[Infinite Induced Oriented Graph Removal Lemma~\cite{AroskarCummings}]\label{lem:InfRem}
Let $\mathcal{F}$ be a (possibly infinite) set of oriented graphs. 
For every $\varepsilon_R > 0$, there exist $n_0$ and $\delta >0$ such that 
for every oriented graph $G$ of order $n\geq n_0$, 
if $G$ contains at most $\delta n^{v(H)}$ induced copies of $H$ for each $H$ in $\mathcal{F}$, then there exists $G'$ of order $n$ 
 such that $G'$ is induced $H$-free for all $H$ in $\mathcal{F}$ and $G'$ can be obtained from $G$ by adding/removing/reorienting at most $\varepsilon_R n^2$ edges.
\end{lemma}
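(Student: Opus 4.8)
The plan is to deduce Lemma~\ref{lem:InfRem} from the general removal lemma for relational structures of Aroskar and Cummings~\cite{AroskarCummings}, after encoding oriented graphs in a suitable finite relational language, and then to indicate why that general theorem holds in the present setting so that the crux of the argument is visible. First I would fix the encoding: an oriented graph on a vertex set $V$ is precisely a structure over the language with one binary relation symbol $E$ satisfying the universal axioms $\forall x\,\neg E(x,x)$ and $\forall x\,\forall y\,\bigl(E(x,y)\to\neg E(y,x)\bigr)$; equivalently, it is a coloring of the edges of the complete graph on $V$ by the three ``colors'' $\{\varnothing,\rightarrow,\leftarrow\}$, with the last two interchanged by a transposition of endpoints. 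The class of all oriented graphs is hereditary and defined by finitely many forbidden induced substructures, so it lies inside the framework of~\cite{AroskarCummings}. Under this dictionary a family $\mathcal F$ of oriented graphs becomes a (possibly infinite) family of forbidden induced substructures, ``number of induced copies of $H$'' becomes the induced substructure count, and ``obtained by adding/removing/reorienting at most $\varepsilon_R n^2$ edges'' is exactly edit distance at most $\varepsilon_R n^2$ in this language, since each such operation recolors a single pair. With the translation in place, Lemma~\ref{lem:InfRem} is the specialization of the Aroskar--Cummings infinite induced removal lemma to this language and this hereditary class, and the hypotheses one must check — a finite relational language, a hereditary ambient class, and the availability of the underlying regularity and sampling lemmas — are all standard; so the cleanest route is simply to verify this specialization carefully.

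For completeness I would then sketch the internal argument, since this is where the real work sits. The finite case is routine: for any \emph{finite} subfamily $\mathcal F_0\subseteq\mathcal F$, the induced removal lemma of Alon, Fischer, Krivelevich, and Szegedy, carried over to edge-colored complete graphs via the strong induced regularity lemma, produces $n_0$ and $\delta$ such that having few induced copies of each $H\in\mathcal F_0$ forces $\tfrac12\varepsilon_R n^2$-closeness to an induced-$\mathcal F_0$-free oriented graph. The passage to an infinite family is a compactness argument in the space of oriented-graph limits. Suppose the lemma fails for some $\varepsilon_R>0$; negating the statement and applying it with parameters $k$ and $1/k$, I would obtain oriented graphs $G_k$ on $n_k\ge k$ vertices having at most $\tfrac1k n_k^{v(H)}$ induced copies of each $H\in\mathcal F$, yet none $\varepsilon_R n_k^2$-close to an induced-$\mathcal F$-free oriented graph. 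Then $n_k\to\infty$ and the relative induced density of every $H\in\mathcal F$ in $G_k$ tends to $0$, so along a convergent subsequence $G_k$ tends to a limit object $W$ whose induced density of every $H\in\mathcal F$ equals $0$.

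The main obstacle, and the genuine content of~\cite{AroskarCummings}, is the limit-level cleaning step: from ``$W$ has zero induced density of every member of the infinite family $\mathcal F$'' one must produce a limit object $W'$ at $L^1$-distance less than $\varepsilon_R$ from $W$ that is \emph{genuinely} induced-$\mathcal F$-free, meaning every finite oriented graph appearing in $W'$ with positive probability avoids all $H\in\mathcal F$ as an induced substructure. For a single forbidden $H$ this is a Lebesgue-density argument on the associated pattern set; the difficulty is carrying it out for infinitely many forbidden patterns simultaneously and uniformly, which is exactly the technical heart of the Aroskar--Cummings machinery and the reason the statement is not a formality. Granting this step, one concludes by sampling an oriented graph $G_k'$ of order $n_k$ from $W'$: a sampling/counting lemma makes $G_k'$ induced-$\mathcal F$-free with high probability, and since $G_k\to W$ while $W$ is within $L^1$-distance $\varepsilon_R$ of $W'$, the graph $G_k'$ is within edit distance $\varepsilon_R n_k^2$ of $G_k$ for all large $k$, contradicting the choice of $G_k$. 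This contradiction yields the lemma; the regularity and sampling inputs are straightforward adaptations of the undirected case, so the only delicate point is the uniform cleaning step described above.
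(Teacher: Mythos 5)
Your proposal takes essentially the same route as the paper: both deduce the lemma by encoding oriented graphs as structures over a language with one binary relation subject to the no-loop and no-$2$-cycle axioms, checking that induced copies, the family $\mathcal{F}$, and the edit distance translate correctly, and then invoking \cite[Theorem~6]{AroskarCummings} as a black box (the paper even notes the same bookkeeping point that a reorientation contributes $2$ to the Aroskar--Cummings distance while an addition/removal contributes $1$, which only helps the direction you need). Your additional sketch of the internal compactness/limit-cleaning argument is not in the paper and is not required for the specialization, but it does not affect correctness.
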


The following derivation of Lemma~\ref{lem:InfRem}, as a special case of~\cite[Theorem~6]{AroskarCummings}, was provided by James Cummings. First start with a language 
$\mathcal{L}$ 
that has the equality symbol and one binary relation symbol $R$ that corresponds to a directed edge. The two axioms of the theory $T$ are 
$\forall x \neg R(x,x)$ and $\forall x \forall y (x \neq y \implies \neg (R(y,x) \wedge R(x,y)))$, representing no loops and no 2-cycles, respectively.
The models for $T$ now correspond exactly to oriented graphs.
(In \cite[Section 2.2]{AroskarCummings}, partitions that are potentially relevant are $\{\{1,2\}\}$ and $\{\{1\},\{2\}\}$.)
This gives $DH^R_{\{\{1,2\}\}}  = \{x : R(x,x)\} = \emptyset$ by the no 
loops axiom, and 
$DH^R_{\{\{1\},\{2\}\}} = \{(x, y) : x \neq y \text{ and }R(x, y)\}$. 
Hence the distance $d(G_1,G_2)$ between two oriented graphs $G_1$ and $G_2$ on a vertex set $V$ with $n$ elements is $D/n^2$, where $D$ is the number of ordered pairs $(v,w)$ of distinct elements of $V$ such that $G_1$ and $G_2$ disagree on  the existence of the directed edge from $v$ to $w$. Observe that a missing edge contributes $1$ to the distance while reversing an edge contributes $2$.
Finally, $p(G_1,G_2)$ is the usual density, which we denote as $d_{G_1}(G_2)$ in this paper.
In this setting, \cite[Theorem~6]{AroskarCummings} is Lemma~\ref{lem:InfRem}.

Let $\mathcal{F}$ be the oriented 4-vertex graphs depicted in Figure~\ref{fig:forb};
we call them the \emph{forbidden oriented graphs}.
A standard flag algebra calculation shows that the forbidden oriented graphs rarely appear in extremal examples.
 
\begin{lemma}\label{lem:FA}
For every $\delta > 0$, there exist $n_0$ and  $\varepsilon' > 0$ such
that every oriented $\vec{T}_3$-free graph $G$ of order $n \geq n_0$ with $\dP(G) \geq \frac{24}{125} - \varepsilon'$
contains at most $\delta n^{4}$ induced copies of an oriented graph in $\mathcal{F}$. Furthermore, $G$ contains at most $\delta n^3$ directed triangles.
\end{lemma}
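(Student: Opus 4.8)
The plan is to run the flag algebra semidefinite program on $\vec{T}_3$-free oriented graphs, using flags of size (say) $5$ or $6$, to produce a certificate that simultaneously (i) bounds $\dP(G)$ from above by $\frac{24}{125}$ in the limit, and (ii) shows that each forbidden density — the density of each $H\in\mathcal F$ and the density of $\vec{C}_3$ — appears with a \emph{negative} coefficient once the optimum is pinned down. Concretely, one seeks a sum-of-squares expression $\sum_i q_i \ge 0$ (the $q_i$ arising from positive semidefinite matrices indexed by the relevant flag types, after the usual averaging/unlabeling operator) together with nonnegative multipliers such that, as an identity in the flag algebra,
\[
\frac{24}{125} - \dP
\;=\; \sum_{H \in \mathcal F} c_H\, d_H \;+\; c_{\vec C_3}\, d_{\vec C_3} \;+\; \sum_i q_i \;+\; o(1),
\]
with every $c_H>0$ and $c_{\vec C_3}>0$. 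Given such an identity, the argument is immediate: if $G$ is a $\vec T_3$-free oriented graph of order $n$ with $\dP(G)\ge \frac{24}{125}-\varepsilon'$, then the left side is at most $\varepsilon' + o(1)$, hence $\sum_H c_H d_H(G) + c_{\vec C_3}d_{\vec C_3}(G) \le \varepsilon' + o(1)$, and since all terms are nonnegative, each individual density $d_H(G)$ and $d_{\vec C_3}(G)$ is at most $(\varepsilon'+o(1))/\min_H c_H$. Choosing $\varepsilon'$ small enough and $n_0$ large enough, each of these densities is below $\delta$, which translates to at most $\delta n^4$ induced copies of each $H\in\mathcal F$ and at most $\delta n^3$ directed triangles (absorbing the binomial normalization constants into the choice of $\delta$ and $\varepsilon'$).

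The execution has two parts. First, set up the flag algebra computation in the theory of $\vec T_3$-free oriented graphs: enumerate all oriented graphs on up to $5$ (or $6$) vertices containing no transitive triangle, compute the relevant multiplication/unlabeling tables, and solve the SDP that maximizes the asymptotic $\vec{P}_4$-density. This should return the value $\frac{24}{125}$ (matching the blow-up of $\vec{C}_5$); the certificate is then extracted and rationalized so the identity above is exact rather than floating-point. Second, and this is the only genuinely nonroutine point, one must ensure the certificate has the \emph{slack-allocation} property: at the optimum the construction $\vec{C}_5(n)$ has zero density of every $H\in\mathcal F$ and of $\vec C_3$, so these densities are ``tight'' constraints, and a generic optimal dual solution will indeed assign them strictly positive coefficients — but a particular SDP solver output might leave some $c_H=0$. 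The fix is standard: either perturb the objective slightly (optimize $\dP + \eta\sum_H d_H + \eta\, d_{\vec C_3}$ for a tiny $\eta>0$, which leaves the optimum value and optimizer unchanged because the construction has all these densities equal to $0$, but forces strictly positive multipliers), or directly add the constraints $d_H \le 0$ with Lagrange multipliers and verify strict complementary slackness.

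The main obstacle I anticipate is purely computational rather than conceptual: getting a numerically clean, rationalizable certificate at the exact value $\frac{24}{125}$ with all forbidden coefficients strictly positive, which may require flags of size $6$ (enlarging the SDP substantially) and careful rounding of the PSD matrices to preserve positive semidefiniteness after rationalization. Since the paper explicitly treats flag algebras as a black box and such certificates are routinely produced by tools like Flagmatic/CSDP, I would simply assert that the computation yields the stated identity, include the certificate data (or a reference to it) as supplementary material, and remark that the strict positivity of the coefficients $c_H$ and $c_{\vec C_3}$ follows from the perturbation trick above, since the extremal construction $\vec{C}_5$ contains none of the forbidden oriented graphs and no directed triangle.
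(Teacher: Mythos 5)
Your proposal is correct and follows essentially the same route as the paper: a plain flag algebra certificate (treated as a black box, with rounded matrices published as supplementary data) that simultaneously certifies $\dP \le \frac{24}{125}$ asymptotically and forces the densities of all graphs in $\mathcal{F}$ and of $\vec{C}_3$ to vanish at the optimum, followed by the standard translation from limiting densities to the finite $(n_0,\varepsilon')$ statement. The only differences are cosmetic: the paper reports that flags on $4$ vertices with two types already suffice, and it does not spell out the positive-coefficient/perturbation bookkeeping you describe, which is subsumed in the computed certificate.
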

\begin{proof}
We perform a calculation using the plain flag algebra framework. 
We obtain 
that if $(G_k)_{k \in \mathbb{N}}$ is a convergent sequence of oriented $\vec{T}_3$-free graphs, then
 ${\displaystyle\lim_{k \to \infty}}\dP(G_k) \leq \frac{24}{125}$.
Moreover, if  ${\displaystyle\lim_{k \to \infty}}\dP(G_k) = \frac{24}{125}$,
then for every $F \in \mathcal{F}$,  it follows that $\displaystyle\lim_{k \to \infty}d_F(G_k) = 0$.
It was sufficient to execute the calculation with flags on 4 vertices and two types.
Rounding was performed 
as described in~\cite{bib-balogh15}.
All technical details of the calculation, including rounded solution matrices, are available at \url{http://lidicky.name/pub/P4noT3} and on the \href{https://arxiv.org/abs/1811.03747}{arXiv}.
\end{proof}

\begin{proof}[Proof of Lemma~\ref{lem:stability}]
Fix $\varepsilon>0$. 
We use positive numbers $\delta$ and $\varepsilon_R$ that depend on Lemmas~\ref{lem:InfRem} and \ref{lem:FA}. 
We specify the dependency later at the end of the proof together with $n_0$ and $\varepsilon'$.

Let $G$ be an oriented graph of order $n \geq n_0$  with $\dP(G) \geq \frac{24}{125} - \varepsilon'$.
Notice that 
\begin{align}
\dP(G) \geq \frac{24}{125}-\varepsilon' = \dP(\vec{C}_5(n)) -\varepsilon' + o(1).\label{eq:C5}
\end{align}

By Lemma~\ref{lem:FA}, $G$ contains at most $\delta n^4$ induced copies of oriented graphs in $\mathcal{F}$, and at most $\delta n^3$ triangles.
By Lemma~\ref{lem:InfRem}, there exists an oriented $\vec{T}_3$-free graph $G'$ (on the same vertex set as $G$) differing from $G$ in at most $\varepsilon_R n^2$ pairs that avoids all oriented graphs in $\mathcal{F}$ and all triangles. 

\begin{figure}
\begin{center}
\includegraphics[page=3, scale=0.8]{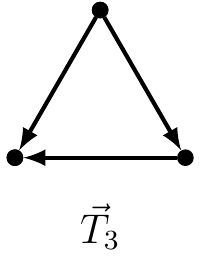}\hfill
\includegraphics[page=4, scale=0.8]{fig-forb}\hfill
\includegraphics[page=5, scale=0.8]{fig-forb}\hfill
\includegraphics[page=6, scale=0.8]{fig-forb}\hfill
\includegraphics[page=7, scale=0.8]{fig-forb}\hfill
\includegraphics[page=8, scale=0.8]{fig-forb}\hfill
\includegraphics[page=9, scale=0.8]{fig-forb}
\end{center}
\caption{Forbidden induced subgraphs.}\label{fig:forb}
\end{figure}

\begin{claim}\label{clm:blowupc5}
$G'$ is a (not necessarily balanced) blow-up of $\vec{C_5}$.
\end{claim}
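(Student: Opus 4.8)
The plan is to pin down the structure of $G'$ by exploiting that it is an oriented graph with no $\vec{T}_3$, no directed triangle, and none of the forbidden $4$-vertex configurations in Figure~\ref{fig:forb}, and that it still has $\vec{P}_4$-density close to $24/125$. First I would record the basic local consequences: since $G'$ has no $\vec{T}_3$ and no directed triangle, every triple of vertices that spans at least two edges spans exactly an oriented $\vec P_3$ (a ``cherry''), so the relation ``$u \to v$'' together with ``$v \to w$'' never closes up. The key relation to study is the out-neighborhood/in-neighborhood structure at each vertex: for a vertex $v$, let $N^+(v)$ and $N^-(v)$ be its out- and in-neighbors, and let $N^0(v)$ be the non-neighbors. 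I expect the forbidden list in Figure~\ref{fig:forb} to force, for every $v$, that $N^+(v)$ is an independent set, $N^-(v)$ is an independent set, and there are no edges between $N^+(v)$ and $N^-(v)$ (otherwise one gets a directed triangle or a $\vec{T}_3$ or one of the listed $4$-vertex graphs).

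Next I would show that ``being connected by a directed $\vec P_3$ through a common middle vertex'' behaves coherently, so that one can partition $V(G')$ into blocks. Concretely, I would prove that the non-adjacency relation, restricted appropriately, is an equivalence relation: if $u,v$ are non-adjacent and $v,w$ are non-adjacent, then $u,w$ are non-adjacent, and moreover $u$ and $v$ have exactly the same out-neighbors and the same in-neighbors outside their common block. Each such step is a short case analysis: assume, say, $u \to w$ while $u \not\sim v \not\sim w$, and find the relevant vertex whose presence together with $\{u,v,w\}$ creates a forbidden induced subgraph (this is where I would lean on Lemma~\ref{lem:FA}'s guarantee that \emph{all} of $\mathcal F$ is excluded in $G'$, not just avoided in density). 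Once vertices in a common block are twins, the quotient $H = G'/{\sim}$ is a loopless oriented graph in which any two vertices are adjacent; that is, $H$ is a tournament, and it is $\vec{T}_3$-free and directed-triangle-free, hence $H$ has no directed path $\vec P_3$ at all in the ``transitive'' sense — wait, more carefully, a $\vec T_3$-free and $C_3$-free tournament on $m$ vertices must be a very restricted object.

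The crucial remaining step is to identify $H$ as exactly $\vec C_5$. Here is where density re-enters: if $H$ had fewer than $5$ parts, one checks directly that the maximum $\vec P_4$-density of any blow-up is strictly below $24/125$, contradicting \eqref{eq:C5} (after absorbing the $o(1)$ and choosing $\varepsilon'$ small); if $H$ had more than $5$ parts, I would argue that a $\vec T_3$-free, $C_3$-free oriented graph in which every two vertices are adjacent and every out- and in-neighborhood is independent simply cannot exist on $6$ or more vertices — equivalently, the only tournaments avoiding both $\vec T_3$ and $\vec C_3$ are on at most $5$ vertices, and on $5$ vertices the only one is $\vec C_5$ (the unique ``rotational'' tournament $R_5$). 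This is a finite check. Combining: $G'$ has its vertex set partitioned into at most $5$ twin-blocks whose quotient is $\vec C_5$, i.e.\ $G'$ is a blow-up of $\vec C_5$, which is the claim.

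The main obstacle I anticipate is the twin/equivalence step: showing that non-adjacent vertices are genuinely interchangeable requires checking that every ``local defect'' (a vertex seeing two non-adjacent vertices differently) extends to one of the specific seven forbidden $4$-vertex oriented graphs, and getting the case analysis exhaustive — in particular handling the configurations where the discrepancy involves a non-edge rather than a reversed edge — is the delicate part. A secondary subtlety is making the density comparison for small quotients fully quantitative: one must verify that every blow-up (not just the balanced one) of every $\vec T_3$-free tournament on at most $4$ vertices has $\vec P_4$-density bounded away from $24/125$, which is a short optimization over weightings of the at-most-four parts but needs to be done honestly to license the choice of $\varepsilon'$. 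Everything after ``$H$ is $\vec C_5$'' is immediate from the definition of a blow-up.
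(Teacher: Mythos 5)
There is a genuine gap: the central structural step of your plan is false for the very graph you are trying to recognize. In a blow-up of $\vec{C}_5$, non-adjacency is not transitive and non-adjacent vertices need not be twins: a vertex of $X_1$ and a vertex of $X_3$ are non-adjacent yet have different out-neighborhoods ($X_2$ versus $X_4$), and while the pairs $X_1,X_3$ and $X_3,X_5$ are non-adjacent, $X_5,X_1$ is adjacent. Hence the quotient of $\vec{C}_5(n)$ by the twin relation is $\vec{C}_5$ itself, which is \emph{not} a tournament, so your claim that the quotient $H$ is a tournament cannot be established. The classification you then invoke is also wrong: any three vertices of a tournament span either $\vec{T}_3$ or $\vec{C}_3$, so the only tournaments avoiding both have at most two vertices; the rotational tournament $R_5$ is full of directed triangles and is not the directed cycle $\vec{C}_5$ (you are conflating the two). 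So the step ``identify $H$ as $\vec{C}_5$ by a finite check of $\vec{T}_3$-free, $\vec{C}_3$-free tournaments'' collapses, and with it the proposed proof. Your preliminary observations (that $N^+(v)$ and $N^-(v)$ are independent with no edges between them) are correct but are equally satisfied by blow-ups of arbitrarily long directed cycles, so by themselves they cannot pin down $\vec{C}_5$; the four-vertex forbidden graphs of Figure~\ref{fig:forb} must enter in an essential way.

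For comparison, the paper's proof avoids quotients and tournaments altogether: since $G'$ still contains an induced $\vec{P}_4$, it fixes one such path $P=v_1,v_2,v_3,v_4$ and classifies every other vertex $y$ by its adjacencies to $P$. Freeness from $\vec{T}_3$ and $\vec{C}_3$ forbids two consecutive neighbors of $y$ on $P$, the forbidden graph $F_1$ forces at least one neighbor, and $F_3,F_4,F_6,F_7$ eliminate all remaining attachment patterns except the five that make $y$ a clone of some $v_i$ or a ``fifth class'' vertex with $v_4\to y\to v_1$; this yields the partition $X_1,\ldots,X_5$ directly, with no density optimization over small quotients needed at this stage (balancedness is handled later, in the next claim). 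Any repair of your argument would essentially have to anchor the case analysis on an induced $\vec{P}_4$ in this way, since the list $\mathcal{F}$ is tailored to attachments to a path rather than to any tournament structure.
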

\begin{proof}
Let $P=v_1,v_2,v_3,v_4$ form an induced $\vec{P}_4$ in $G'$.
We call a vertex $x$ a \emph{clone} of $v_i$ if $x$ and $v_i$ have exactly the same in-neighbors and out-neighbors on  $P$.
Let $X_1,\ldots,X_5$ be pairwise disjoint subsets of $V(G')$, where
\begin{align*}
X_i &= \{x : x \text{ is a clone of } v_i \text{ in } P\} & \text{ for } i\in\{1,2,3,4\},\\
X_5 &= \{ x: N^+(x)\cap V(P)=\{v_1\}, N^-(x)\cap V(P)=\{v_4\}   \}.
\end{align*}

Using the list $\mathcal{F}$ of forbidden oriented graphs, we show that $X_1,\ldots,X_5$ is a partition of $V(G')$.
Let $y \in V(G') \setminus \{v_1,v_2,v_3,v_4\}$.
By $F_1$, $y$ has at least one  neighbor on $P$. 
By $\vec{T}_3$ and $\vec{C}_3$, $y$ cannot have two consecutive neighbors on $P$.
In particular, $y$ cannot have three or four neighbors on $P$. 

Assume $y$ has exactly two neighbors $s$ and $t$ on $P$.
If $\{s, t\}=\{v_1, v_3\}$, then $y\in X_2$ by $F_6$ and $F_7$. 
If $\{s, t\}=\{v_2, v_4\}$, then $y\in X_3$ by $F_6$ and $F_7$. 
If $\{s, t\}=\{v_1, v_4\}$, then $y\in X_5$ by $F_3$ and $F_4$. 

Assume $y$ has exactly one neighbor $z$ on $P$. 
By $F_3$ and $F_4$, we know $z\not\in\{v_1, v_4\}$.
If $z=v_2$, then $y\in X_1$ by $F_3$, and if $z=v_3$, then $y\in X_4$ by $F_4$. 
\end{proof}

Next, we show $G'$ is close to being a balanced blow-up of $\vec{C}_5$.

\begin{claim}\label{cl:bal}
For every $\varepsilon_B > 0$, there exist $\varepsilon_R > 0$ and $\varepsilon' > 0$
such that if $n_0$ is sufficiently large and  $G$ and $G'$ differ in at most $\varepsilon_Rn^2$ pairs and $\dP(G) \geq \frac{24}{125}-\varepsilon'$,
then  $\vec{C}_5(n)$ and $G'$ differ in at most $\varepsilon_B n^2$ pairs.
\end{claim}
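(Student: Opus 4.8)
The plan is to bound the edit distance between $G'$ and $\vec{C}_5(n)$ by controlling the sizes of the blow-up classes of $G'$. Let $X_1,\dots,X_5$ be the blow-up classes of $G'$ supplied by Claim~\ref{clm:blowupc5}, set $a_i=|X_i|$, and take all indices modulo $5$.

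First I would establish an exact count of induced copies of $\vec{P}_4$ in $G'$. Two vertices lying in a common class $X_i$ are non-adjacent and have identical in- and out-neighbourhoods, so in an induced copy $w_1\to w_2\to w_3\to w_4$ they could only occupy one of the non-adjacent position pairs $(w_1,w_3)$, $(w_1,w_4)$, $(w_2,w_4)$; but in each of these the two positions have different neighbourhoods inside the copy, a contradiction. Hence every induced $\vec{P}_4$ of $G'$ consists of one vertex from each of four cyclically consecutive classes, and every such selection is such a copy, so
\[
\dP(G')=\frac{1}{\binom n4}\sum_{i=1}^5 a_i\,a_{i+1}\,a_{i+2}\,a_{i+3}.
\]
Because $G$ and $G'$ differ in at most $\varepsilon_R n^2$ pairs and each altered pair changes the number of induced copies of a fixed $4$-vertex graph by at most $\binom{n-2}{2}$, we have $|\dP(G)-\dP(G')|=O(\varepsilon_R)$. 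Together with $\dP(G)\ge\frac{24}{125}-\varepsilon'$ and $\binom n4=\frac{n^4}{24}(1+o(1))$ this yields
\[
\sum_{i=1}^5 a_i\,a_{i+1}\,a_{i+2}\,a_{i+3}\ \ge\ \Big(\tfrac{1}{125}-\varepsilon''\Big)n^4 ,
\]
where $\varepsilon''\to 0$ as $\varepsilon',\varepsilon_R\to 0$ and $n_0\to\infty$.

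Next I would use the stability of the associated symmetric optimization problem. Writing $x_i=a_i/n$, the displayed sum equals $n^4\,e_4(x_1,\dots,x_5)$, the fourth elementary symmetric polynomial, since the term indexed by $i$ omits exactly the variable $x_{i-1}$ and $i-1$ runs over all residues. By Maclaurin's inequality, $e_4\le\frac{1}{125}$ on the simplex $\{x_i\ge 0,\ \sum_i x_i=1\}$, with equality only at $x=(\tfrac15,\dots,\tfrac15)$; since $e_4$ is continuous on this compact set, for every $\beta>0$ there is $\gamma>0$ such that $e_4(x)\ge\frac{1}{125}-\gamma$ forces $|x_i-\tfrac15|\le\beta$ for all $i$. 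Given $\varepsilon_B>0$, I would fix $\beta=\varepsilon_B/6$, take the corresponding $\gamma$, and then choose $\varepsilon',\varepsilon_R$ small and $n_0$ large so that $\varepsilon''\le\gamma$, obtaining $|a_i-n/5|\le\beta n$ for all $i$.

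Finally, near-balanced class sizes translate into the desired edit-distance bound: choosing a bijection $V(G')\to V(\vec{C}_5(n))$ that maps as much of each $X_i$ as possible into the $i$-th class of $\vec{C}_5(n)$, the number of vertices not placed in their intended class is at most $\sum_i|a_i-\lfloor n/5\rfloor|+O(1)\le 5\beta n+O(1)$, and each such vertex lies in at most $n$ pairs, so $G'$ and $\vec{C}_5(n)$ differ in at most $(5\beta n+O(1))\,n\le\varepsilon_B n^2$ pairs once $n_0$ is large. The main obstacle is the stability step — knowing that $e_4$ has a unique maximizer on the simplex and that this maximum is robust; the rest is routine bookkeeping of the constants $\varepsilon',\varepsilon_R$ and of $n_0$ in terms of $\varepsilon_B$.
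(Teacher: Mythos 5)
Your proposal is correct and follows essentially the same route as the paper: both lower-bound the number of induced $\vec{P}_4$'s in $G'$ via the edit distance to $G$ and the density hypothesis, identify that count with the fourth elementary symmetric function of the class sizes (the paper writes it as $\bigl(\prod_i |X_i|\bigr)\bigl(\sum_i 1/|X_i|\bigr)$, which is the same quantity), invoke that this is maximized only at balanced sizes to force $|X_i|\approx n/5$, and then convert near-balanced sizes into an $\varepsilon_B n^2$ edit bound by relocating few vertices. Your write-up merely fills in details the paper leaves implicit (the exact $\vec{P}_4$ count in a $\vec{C}_5$ blow-up and the compactness/Maclaurin stability step), so no further comparison is needed.
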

\begin{proof}
Given an oriented graph $H$, let $\vec{P_4}(H)$ denote the number of induced copies of $\vec{P_4}$ in $H$. 
Since $G'$ was obtained from $G$ by changing at most $\varepsilon_R n^2$ pairs, $\vec{P_4}(G')$ is large:
\begin{align}
\vec{P_4}(G')  &\geq \vec{P_4}(G) - \varepsilon_R n^4. \label{eq:a}
\end{align}

Notice that $\dP(G) \geq \frac{24}{125}-\varepsilon'$ implies that for sufficiently large $n$,
\begin{align}
 \vec{P}_4(G) \geq \left(\frac{24}{125}-\varepsilon'\right)\binom{n}{4} \geq  5\cdot \left(\frac{n}{5}\right)^4 - \varepsilon' n^4. \label{eq:b}
\end{align}
Recall that $G'$ is a (not necessarily balanced) blow-up of $\vec{C_5}$ by Claim~\ref{clm:blowupc5}.
By evaluating $\vec{P}_4(G')$ and combining it with \eqref{eq:a} and \eqref{eq:b} we obtain:
\begin{align*}
\left( \prod_i |X_i|\right)\cdot \left( \sum_i\frac{1}{|X_i|}  \right) \geq
 \vec{P}_4(G')  \geq
 5\left(\frac{n}{5}\right)^4 -  \varepsilon' n^4 - \varepsilon_R n^4 
=  \left(\frac{1}{5^3}- \varepsilon'-\varepsilon_R\right)n^4.
\end{align*}
The product on the left is maximized when $|X_i|={n\over 5}$ for each $i\in\{1,2,3,4,5\}$, and the maximum value is ${n^4\over 5^3}$.
Hence, for every $\varepsilon_B > 0$, there exist $\varepsilon_R > 0$ and $\varepsilon' > 0$ such that
if $\varepsilon' + \varepsilon_R$ is small enough, then
$ \left(\frac{1-\varepsilon_B}{5}\right) n \leq |X_i| \leq \left(\frac{1+\varepsilon_B}{5}\right) n$.
Therefore,  in order to obtain $\vec{C}_5(n)$ from $G'$, we need to move at most $\varepsilon_Bn$ vertices between parts,
which means changing at most $\varepsilon_Bn^2$ pairs.
\end{proof}

Let $\varepsilon_B = \varepsilon/2$. Let $\varepsilon_R \leq \varepsilon/2$ be small enough such $\varepsilon_R$, $\varepsilon_B$, $\varepsilon'$, and $n_0$ satisfy Claim~\ref{cl:bal}. 
Let $\delta > 0$ be small enough to satisfy Lemma~\ref{lem:InfRem} with $\varepsilon_R$.
Finally, let $\varepsilon'$ and $n_0$ be small and big, respectively, enough also for Lemma~\ref{lem:FA} when applied with $\delta$.
These choices will guarantee that $G$ is different from $\vec{C}_5(n)$ in at most
$(\varepsilon_B+\varepsilon_R)n^2 \leq \varepsilon n^2$ pairs.
\end{proof}

\section{Exact Result}\label{sec:exact}

This section contains the proof of Theorem~\ref{thm-main}. 
The proof follows the following outline.
We start with an extremal example $G$ with order $n$ and use Lemma~\ref{lem:stability} to conclude that $G$ is almost $\vec{C}_5(n)$. 
We first put ``unruly vertices'' aside and argue that the rest of $G$ is exactly a (not necessarily balanced) blow-up of $\vec{C}_5$. 
We then argue that the ``unruly vertices'' have drastically different sets of neighbors compared to the rest of the vertices in $G$.
Finally, we show that if there is a unruly vertex, then it would be in too few copies of an induced $\vec{P_4}$.
Hence, there are no ``unruly vertices'', and we finish the proof by showing that $G$ is a balanced blow-up of $\vec{C}_5$.

Given an oriented graph $H$ and a set of vertices $A\subset V(H)$, let $\vec{P}_4(H,A)$ be the number of induced $\vec{P}_4$'s in $H$ containing all vertices in $A$. 
If $A = \{a\}$, then we simplify the notation and write $\vec{P}_4(H,a)$ instead of $\vec{P}_4(H,\{a\})$.

\begin{proof}[Proof of Theorem~\ref{thm-main}]

For simplicity, we fix $\varepsilon$ to be sufficiently small, say $0.0000005$.
Let $n_0$ be big enough to apply Lemma~\ref{lem:stability} with $\varepsilon$  
such that every extremal oriented graph $H$ of order at least $n_0$ satisfies $\dP(H) \geq \frac{24}{125} - \varepsilon$. 

Let $G$ be an extremal oriented graph of order $ n \geq n_0$.
By Lemma~\ref{lem:stability}, the vertices of $G$ can be partitioned into five parts
$X_1,\ldots,X_5$ with sizes as equal as possible such that the graph can be turned into $\vec{C_5}(n)$ after adding, deleting, and/or reorienting the edge between at most $\varepsilon n^2$ pairs of vertices.

Call a pair of vertices where the adjacency needs to be changed \emph{weird}.
Use $f$ to denote the number of weird pairs in  $G$, and we know that
\[
f \leq \varepsilon n^2.
\]

For a vertex $v \in V(G)$, let $f_p(v)$ denote the number of weird pairs containing $v$.
Move every vertex with $f_p(v) \geq 0.001n$ to a new set $X_0$, so we know the following two inequalities hold:
\begin{align*}
|X_0| &\leq 2f/0.001n = 0.001n\\
f_p(v) &\leq 0.001n \text{ for all } v \in X_1\cup\cdots\cup X_5.
\end{align*}

Let $x_{min}$ and $x_{max}$ be a lower bound and an upper bound, respectively, on the size of $X_i$  for all $i \in \{1,2,3,4,5\}$.
Since we started with a balanced partition and $|X_0| \leq 0.001n$, we may use
\[
x_{min} = 0.198n \leq |X_i| \leq \lceil 0.2n\rceil  = x_{max} \hskip 1em\text{ for } i \in \{1,2,3,4,5\}.
\]

\begin{claim}\label{cl:nofunky}
$G - X_0$ is a blow-up of $\vec{C_5}$.
\end{claim}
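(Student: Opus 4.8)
The plan is to show that $G-X_0$ is an \emph{exact} (not merely approximate) blow-up of $\vec{C}_5$ by a local-stability argument: every vertex in $X_1\cup\cdots\cup X_5$ must behave toward the rest of $G-X_0$ exactly like its assigned part dictates, because any deviation forces the vertex into too few induced $\vec{P}_4$'s, contradicting extremality. First I would extract a large ``core'' of well-behaved vertices: since each $v\in X_1\cup\cdots\cup X_5$ lies in at most $0.001n$ weird pairs, for each part $X_i$ all but a tiny fraction of its vertices are non-weird to all but a tiny fraction of every other part; in particular a typical vertex $v\in X_i$ sees almost all of $X_{i-1}$ correctly (all in-edges, say), almost all of $X_{i+1}$ correctly (all out-edges), and is non-adjacent to almost all of $X_{i-2}\cup X_{i+2}$ (indices mod $5$). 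Call such $v$ \emph{typical}; the number of non-typical vertices is $O(\varepsilon n)$.

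Next I would argue that typical vertices already induce many $\vec{P}_4$'s and, more importantly, pin down the adjacency of an \emph{arbitrary} vertex $w\in X_1\cup\cdots\cup X_5$. Fix $w$, say $w\in X_3$. Using the forbidden family $\mathcal{F}$ together with $\vec T_3$ and $\vec C_3$ (exactly as in Claim~\ref{clm:blowupc5}, but now applied to an induced $\vec{P}_4$ all of whose vertices are typical and lie one in each of four consecutive parts), I would show: for any typical $P=v_1v_2v_3v_4$ with $v_j\in X_{2+j}$, the vertex $w$ must be a clone of $v_3$ on $P$, i.e. $w$ has exactly $v_2$ as an in-neighbor and $v_4$ as an out-neighbor among $V(P)$, and is non-adjacent to $v_1$. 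Since $w$ is non-weird to almost all of $X_2,X_4,X_5$ (and there are many typical such paths through almost every choice of $v_1\in X_5, v_2\in X_2, v_4\in X_4$), this forces $w$ to have \emph{all} out-edges to all but $o(n)$ of $X_4$, \emph{all} in-edges from all but $o(n)$ of $X_2$, and non-adjacency to all but $o(n)$ of $X_5$. An analogous argument using paths of the other orientation through $X_1,X_2,X_3,X_4$ forces the correct behavior of $w$ toward $X_1$ and $X_5$. Now $w$ is non-weird to all but $o(n)$ vertices outside $X_0$. Finally, to upgrade ``all but $o(n)$'' to ``all'': suppose $w\in X_3$ has even a single wrong adjacency to some $u\in X_4$ (wrong orientation, or missing/extra edge) with $u$ typical. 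Then I count $\vec{P}_4(G-X_0,\{w,u\})$ versus $\vec{P}_4(G-X_0,\{w',u\})$ for a typical $w'\in X_3$: the correct pair $w',u$ extends to $\Theta(n^2)$ induced $\vec{P}_4$'s (pick a vertex of $X_5$ and one of $X_1$, or a vertex of $X_2$ and one of $X_4$, etc.), while the wrong pair $w,u$ loses a constant fraction of these and gains at most $o(n^2)$ from forbidden configurations, so swapping $w$ for a clone of $w'$ strictly increases $\dP$ — contradicting extremality. Repeating over all pairs shows $G-X_0$ has no weird pairs at all, hence is exactly a blow-up of $\vec{C}_5$.

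The main obstacle I expect is the last step: making the swapping/counting argument genuinely local and self-consistent. When I replace $w$ by a clone $w'$ of a typical vertex, $w'$ itself may be incident to $X_0$ and to the (few) remaining non-typical vertices in nonstandard ways, so the count $\vec{P}_4(G,w)$ versus $\vec{P}_4(G,w')$ must be controlled \emph{including} copies through $X_0$; one has to check that these contribute only $O(|X_0|\cdot n)=O(0.001n^2)$ and are dominated by the $\Theta(n^2)$ gain from fixing the within-$(G-X_0)$ adjacencies — this is where the slack between the constant $0.001$ and the true error needs to be tracked carefully, and where one must ensure the argument does not become circular (fixing $w$ relies on typicality of other vertices, which in turn is only ``approximate'' until their pairs are fixed too). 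The clean way around this is to process all of $X_1\cup\cdots\cup X_5$ simultaneously: define the total deficiency $\sum_{v\notin X_0} (\text{wrong pairs at }v)$ and show that if it is positive, some single reassignment of one vertex's neighborhood (to match its part) strictly increases $\dP(G)$, driving the deficiency down; since it is a nonnegative integer, it must be $0$.
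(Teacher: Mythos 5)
Your overall strategy (approximate structure plus a local switching argument driven by extremality) is in the right family, but the key mechanism you propose has a genuine gap. In your final step you deduce from a comparison of pair counts $\vec{P}_4(G,\{w,u\})$ versus $\vec{P}_4(G,\{w',u\})$ that replacing $w$ by a clone of a \emph{typical} $w'$ increases the number of induced $\vec{P}_4$'s. That inference is not valid: cloning $w'$ over $w$ changes \emph{all} pairs at $w$, not just the one bad pair, and ``typical'' only guarantees $w'$ has at most $0.001n$ weird pairs of its own. Each such weird pair at $w'$ can cost on the order of $n^2$ induced copies, so the total deficit carried by $w'$ can be as large as $\Theta(0.001\,n^3)$, which completely swamps the $\Theta(n^2)$ gain from repairing the single wrong adjacency of $w$; the swap therefore need not increase the count, and no contradiction with extremality follows. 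You flag this difficulty yourself, but the proposed repair (reassigning one vertex's whole neighborhood, monitored by a global ``deficiency'') is only a sketch: it still needs the actual counting inequality, and it must also verify that the modified graph stays $\vec{T}_3$-free (an ideal neighborhood for $w$ can form a transitive triangle with a weird edge elsewhere), since extremality is only within the family $\mathcal{\vec T}$. The paper avoids all of this by working pair by pair rather than vertex by vertex: for a weird pair $uv$ in $G-X_0$ it flips just that pair, notes that every induced $\vec{P}_4$ through the weird pair must contain a vertex of $X_0$ or another weird pair (so at most $nf_p(u)+nf_p(v)+n|X_0|+f$ copies), while the corrected pair lies in at least $3x_{min}^2-nf_p(u)-nf_p(v)-f$ copies, and these two bounds contradict extremality since $f_p(u),f_p(v),|X_0|\leq 0.001n$. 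That localization to copies containing both endpoints of the flipped pair is exactly the ingredient your argument is missing.

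Two further remarks. First, your intermediate stage invoking the forbidden family $\mathcal{F}$ ``exactly as in Claim~\ref{clm:blowupc5}'' is not available for $G$: the extremal graph $G$ is not $\mathcal{F}$-free (only the cleaned graph $G'$ produced by the removal lemma is), so you cannot argue that an arbitrary $w$ is an exact clone on every typical path. Fortunately that stage is also unnecessary: the conclusion you extract from it (each $w\notin X_0$ agrees with the blow-up pattern on all but $o(n)$ vertices) is immediate from $f_p(w)\leq 0.001n$, which is how the vertices outside $X_0$ were defined. Second, your ``count gains versus losses through $X_0$'' bookkeeping is the right instinct and matches the $n\cdot|X_0|$ terms in the paper's estimates, but it only becomes decisive once the modification is restricted to a single weird pair.
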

\begin{proof}

Let $uv$ be a weird pair in $G - X_0$.
Obtain $G_{uv}$ from $G$ by making $uv$ not weird.
%All induced $\vec{P}_4$'s either in $G_{uv}$ but not in $G$ or in $G$ but not in $G_{uv}$ must contain both $u$ and $v$.
All induced $\vec{P}_4$'s in exactly one of $G_{uv}$ and $G$ must contain both $u$ and $v$.
Recall that $\vec{P}_4(H,\{u,v\})$ is the number of induced $\vec{P}_4$'s in $H$ containing vertices $u$ and $v$.

Note that $u$ and $v$ form, up to indexes, one of the Type 1--4 weird edges depicted in Figure~\ref{fig:funky}.
Type 5 is excluded, as otherwise $u$ and $v$ would have a common outneighbor forming a $\vec{T}_3$, which is forbidden.
Every $P_4$ contains two vertices other than $u$ and $v$ from two different sets of $X_1,\ldots,X_5$.
This would give $3\cdot x_{min}^2$ choices, yet, weird edges may prevent some of these from actually becoming $\vec{P}_4$s. 
Hence we get an easy lower bound.
\[
\vec{P}_4(G_{uv},\{u,v\}) \geq 3\cdot x_{min}^2 - n\cdot f_p(u) - n\cdot f_p(v)  - f.
\]
 
For $G$, note that every induced $\vec{P}_4$ containing the weird pair $uv$ must contain
either a vertex from $X_0$ or another weird pair. 
Hence
\[
\vec{P}_4(G,\{u,v\}) \leq n\cdot f_p(u) + n\cdot f_p(v)+ n\cdot |X_0| + f.
\]
By the extremality of $G$, we get
 \begin{align*}
\vec{P}_4(G,\{u,v\}) &\geq \vec{P}_4(G_{uv},\{u,v\}) \\
                         0 &\geq 3\cdot x_{min}^2 - 2n\cdot f_p(u) - 2n\cdot f_p(v) - n\cdot |X_0| - 2f \\
                         0 & \geq \left(3\cdot 0.198^2 - 5\cdot 0.001 - 0.000001\right)n^2 > 0.1 n^2,
\end{align*}
which is a contradiction.
\begin{figure}
\begin{center}
\includegraphics[page=1]{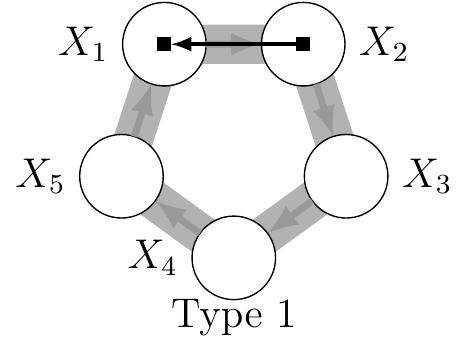}
\includegraphics[page=2]{fig-funky}
\includegraphics[page=3]{fig-funky}
\includegraphics[page=4]{fig-funky}
\includegraphics[page=5]{fig-funky}
\end{center}
\caption{Possible weird pairs.}\label{fig:funky}
\end{figure}
\end{proof}

\begin{claim}\label{cl:x0funky}
For $i\in\{1,2,3,4,5\}$ and a vertex $x \in X_0$, if $x$ is moved to $X_i$, then  $f_p(x) \geq 0.17n$.
\end{claim}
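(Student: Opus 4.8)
The plan is to argue by contradiction, after first arranging the partition so that no vertex of $X_0$ can resemble a vertex of $\vec{C}_5(n)$. I would replace the partition $X_1,\dots,X_5$ coming from Lemma~\ref{lem:stability} by one minimising the number of weird pairs among all partitions of $V(G)$ into five parts each of size at least $0.19n$; this partition still witnesses that $G$ is within $\varepsilon n^2$ pairs of $\vec{C}_5(n)$, so it is near-balanced, the bounds $|X_0|\le 0.001n$ and $0.198n\le |X_i|\le\lceil 0.2n\rceil$ persist, and Claim~\ref{cl:nofunky} still holds. Minimality has the key consequence that for every $x\in X_0$, moving $x$ into any of the five parts cannot decrease the number of weird pairs, and $x$ already contributes at least $0.001n$ weird pairs in its current part (that is precisely why $x\in X_0$); hence $f_p(x)\ge 0.001n$ regardless of where $x$ is placed. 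Now assume, for contradiction, that some $x\in X_0$ can be moved into some $X_i$ with $f_p(x)<0.17n$.

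I claim this forces $x$ to be an \emph{exact twin} of a vertex of $X_i$ in $G$. Since $|X_{i\pm1}|\ge 0.198n$, the bound $f_p(x)<0.17n$ gives that $x$ has more than $0.028n$ out-neighbours in $X_{i+1}$, more than $0.028n$ in-neighbours in $X_{i-1}$, and fewer than $0.17n$ edges to each of $X_i$, $X_{i+2}$, $X_{i-2}$. As $G-X_0$ is a complete blow-up of $\vec{C}_5$ (Claim~\ref{cl:nofunky}) and $G$ is $\vec{T}_3$-free, an out-neighbour $y\in X_{i+1}$ of $x$ combined with the complete arc set $X_{i+1}\to X_{i+2}$, an in-neighbour $b\in X_{i-1}$ of $x$ combined with $X_{i-2}\to X_{i-1}$, and the interaction of $b$ with $X_{i-1}\to X_i$, force $x$ to have no edges to $X_i\cup X_{i+2}\cup X_{i-2}$; thus the only vertices on which $x$ can disagree with a vertex of $X_i$ of the blow-up are those of $X_{i-1}\cup X_{i+1}$ and the at most $0.001n$ other vertices of $X_0$. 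To kill even these, I would delete $x$ and insert instead a twin $y$ of a vertex $u$ of $G-X_0$ whose part meets the most induced copies of $\vec{P}_4$; the new oriented graph is again $\vec{T}_3$-free of order $n$, so by the extremality of $G$ we get $\vec{P}_4(G,x)\ge \vec{P}_4\big(G-x,u\big)$, which is at least the number of induced $\vec{P}_4$'s through a vertex of $X_i$ in the blow-up $G-X_0$. On the other hand, because every edge at $x$ lies in $X_{i-1}\cup X_{i+1}$, a count over the four windows of four consecutive parts that contain $X_i$ shows $\vec{P}_4(G,x)$ is at most that same number, with equality only when $x$ is out-complete to $X_{i+1}$, in-complete from $X_{i-1}$, and otherwise non-adjacent to the blow-up. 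Hence $x$ is an exact twin of a vertex of $X_i$.

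But then, moving such an $x$ into $X_i$, every weird pair at $x$ lies inside $X_0\setminus\{x\}$, so $f_p(x)\le|X_0|-1<0.001n$, contradicting the first paragraph; therefore $f_p(x)\ge 0.17n$ for every $i$. The step I expect to be hardest is the last part of the twin claim: making the window count precise enough that a single missing or reversed arc between $x$ and $X_{i-1}\cup X_{i+1}$ provably costs more induced $\vec{P}_4$'s than can be regained from the induced $\vec{P}_4$'s through $x$ and the other (at most $0.001n$) unruly vertices, which needs an extra estimate on how strongly those vertices can interact with $x$. A lesser nuisance is the bookkeeping in the $\vec{T}_3$-elimination when $x$ has reversed arcs into $X_{i+2}$ or $X_{i-2}$; here the scarcity of directed triangles guaranteed by Lemma~\ref{lem:FA} is the relevant extra input.
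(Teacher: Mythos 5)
Your plan stands or falls on the ``exact twin'' step, and that step has a genuine gap in both of its halves. First, the $\vec{T}_3$-elimination does not give ``no edges to $X_i\cup X_{i+2}\cup X_{i-2}$'': with an out-neighbour $y\in X_{i+1}\setminus X_0$ and an in-neighbour $b\in X_{i-1}\setminus X_0$ in hand, $\vec{T}_3$-freeness kills arcs between $x$ and $X_i$, arcs $x\to X_{i+2}$, and arcs $X_{i-2}\to x$, but a reversed arc $z\to x$ with $z\in X_{i+2}$ only creates the directed cycles $z\to x\to y\to z$, and $x\to z$ with $z\in X_{i-2}$ only creates $x\to z\to b\to x$ -- these are $\vec{C}_3$'s, not $\vec{T}_3$'s, so they are not forbidden; and your fallback to Lemma~\ref{lem:FA} cannot repair this, because $\delta n^3$ is a global triangle bound while all triangles such arcs create at the single vertex $x$ number only $O(n^2)$, far below that budget. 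Second, and more fundamentally, the vertex-level comparison you propose (delete $x$, insert a clone of a best blow-up vertex $u$, and use extremality) cannot force \emph{exactness}. The quantities compared are of order $n^3$, the cloning step itself carries $O(n^2)$ slack, and -- decisively -- induced $\vec{P}_4$'s through $x$ and another vertex of $X_0$ may contribute up to $|X_0|\binom{n}{2}\approx 0.0005\,n^3$, whereas each missing or reversed arc between $x$ and $X_{i\pm1}$ costs only about $3x_{min}^2\approx 0.12\,n^2$ copies. So your count tolerates on the order of $0.004n$ discrepancies at $x$; the best conclusion available is an \emph{approximate} twin with $f_p(x)$ perhaps as large as several thousandths of $n$, which does not contradict the lower bound $f_p(x)\ge 0.001n$ you extracted from the weird-minimizing partition. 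You flag exactly this as the hardest step and say it ``needs an extra estimate on how strongly those vertices can interact with $x$''; no such estimate exists at this stage, since $|X_0|=0$ is only proved later, \emph{using} the present claim.

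For contrast, the paper avoids this trap by arguing at the level of a single weird pair rather than a single vertex. Either $x$ is in a weird pair of a type that, by $\vec{T}_3$-freeness, forces $x$ to be non-adjacent to an entire part, giving $f_p(x)\ge x_{min}=0.198n$ outright; or one takes one mild weird pair $xy$, makes it conform to the blow-up, and compares $\vec{P}_4(G,\{x,y\})$ with $\vec{P}_4(G_{xy},\{x,y\})$. Both pair-counts are $\Theta(n^2)$, so the unruly-vertex slack is only $|X_0|\cdot n\le 0.001n^2$, negligible against the $3x_{min}^2$ gain from repairing the pair, and extremality immediately yields $f_p(x)\ge 0.17n$; no exact structural conclusion about $x$ is ever needed. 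A secondary issue with your setup: replacing the partition of Lemma~\ref{lem:stability} by a weird-minimizing one over parts of size at least $0.19n$ does not automatically preserve near-balance (hence the bounds $0.198n\le|X_i|\le\lceil 0.2n\rceil$ and the proof of Claim~\ref{cl:nofunky}); you would need to rerun a Claim~\ref{cl:bal}-type argument, and the later claims of the theorem would have to be carried out with your partition -- fixable, but asserted rather than proved, and in any case not enough to rescue the main step.
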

\begin{proof}
Fix a vertex $x\in X_0$ and without loss of generality let $i=1$.
There are five possible types of weird pairs containing $x$, see Figure~\ref{fig:funky}.

If $x$ is in a weird pair of Type 3 and Type 5, then $x$ is not adjacent to a vertex in $X_2$ and $X_5$, respectively,
because $\vec{T}_3$ is forbidden.
Hence, the number of weird edges incident with  $x$ is at least $x_{min} = 0.198n$.

Therefore, we may assume that all weird pairs involving $x$ are of Type 1, 2, or 4.
Let $xy$ be a weird pair of Type 1, 2, or 4.
Obtain $G_{xy}$ from $G$ by making $xy$ not weird.
Recall that Claim~\ref{cl:nofunky} implies that all weird pairs in $G-(X_0-x)$ contain $x$.
This allows us to use slightly better estimates when counting the possible induced $\vec{P}_4$'s containing $x$ and $y$.
We get
\[
\vec{P}_4(G,\{x,y\}) \leq  x_{max} f_p(x) + \binom{f_p(x)}{2} + |X_0|\cdot n
\]
and
\[
\vec{P}_4(G_{xy},\{x,y\}) \geq  3 x_{min}^2 - 2x_{max}f_p(x).
\]
By the extremality of $G$, we get
\begin{align*}
\vec{P}_4(G,\{x,y\}) &\geq \vec{P}_4(G_{xy},\{x,y\})\\
x_{max} f_p(x) + \binom{f_p(x)}{2} + |X_0|\cdot n &\geq  3x_{min}^2 - 2x_{max}f_p(x)\\
\frac{f_p(x)^2}{2} + 3x_{max}f_p(x) -  3x_{min}^2 + |X_0|\cdot n &\geq 0
\end{align*}
This gives $f_p(x) \geq 0.17n$ and finishes the proof of Claim~\ref{cl:x0funky}.
\end{proof}

\begin{claim}\label{cl:many}
For every vertex $x$, $\vec{P}_4(G,x) \geq 0.19\binom{n}{3}$.
\end{claim}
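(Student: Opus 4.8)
The plan is to establish the lower bound $\vec{P}_4(G,x) \geq 0.19\binom{n}{3}$ separately for the two possible locations of $x$: the ``unruly'' set $X_0$, and one of the five blow-up parts $X_1,\dots,X_5$. In both cases I would count induced $\vec{P}_4$'s through $x$ by comparing against the count one gets in the clean balanced blow-up $\vec{C}_5(n)$ and then bounding the loss caused by weird pairs and by $X_0$.

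First consider $x \in X_i$ for some $i \in \{1,\dots,5\}$, say $i=1$. In $\vec{C}_5(n)$, a fixed vertex of a part lies in exactly the induced $\vec{P}_4$'s obtained by picking, together with $x$, one vertex from each of three of the remaining four parts in the appropriate cyclic pattern; this gives on the order of $4 \cdot x_{min}^3 \approx 4(0.198n)^3 \binom{\cdot}{\cdot}$-type count, comfortably above $0.19\binom{n}{3}$ with room to spare. The actual graph $G$ differs from this ideal only through weird pairs: a candidate triple $\{y_1,y_2,y_3\}$ fails to form an induced $\vec{P}_4$ with $x$ only if one of the (at most) $\binom{4}{2}$ pairs among $\{x,y_1,y_2,y_3\}$ is weird, or one of the $y_j$ lies in $X_0$. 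Since $f_p(x) \leq 0.001n$ (as $x \notin X_0$), the number of lost triples involving a weird pair at $x$ is at most $f_p(x)\cdot n \leq 0.001 n^2$, the number lost to an internal weird pair among the $y_j$'s is at most $f\cdot n \leq \varepsilon n^3$, and the number lost to $X_0$ is at most $|X_0|\cdot n^2 \leq 0.001 n^3$. Subtracting these from the clean count and dividing by $\binom{n}{3}$ still leaves a quantity well above $0.19$, so the inequality holds.

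Next consider $x \in X_0$. Here I would pick the index $i$ minimizing $f_p(x)$ after hypothetically moving $x$ into $X_i$ — but actually the cleaner route is: by Claim~\ref{cl:x0funky}, whatever part $x$ is assigned to, it is in at least $0.17n$ weird pairs, which means $x$ is genuinely adjacent (or non-adjacent) to at least $0.17n$ vertices in a pattern very different from a clone. The key observation is that $x$ still must interact with the five parts in \emph{some} consistent way in $G - X_0$, which is an honest blow-up of $\vec{C}_5$ by Claim~\ref{cl:nofunky}; relative to $x$, each part $X_j$ is either an in-set, an out-set, or a non-neighbor-set, and the $\vec{T}_3$-free condition together with the forbidden graphs in $\mathcal{F}$ forces the neighborhood pattern of $x$ on the $\vec{C}_5$-blow-up to still contain a nontrivial ``$\vec{P}_4$-rich'' configuration — e.g., $x$ having out-neighbors in one part and in-neighbors in another non-adjacent part still yields $\Omega(n^3)$ induced $\vec{P}_4$'s through $x$ by completing with a third and fourth vertex from the remaining parts. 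I would enumerate the few possible adjacency patterns of $x$ to $X_1,\dots,X_5$ (up to the cyclic symmetry of $\vec{C}_5$) and check in each that the guaranteed count is at least $0.19\binom{n}{3} + (\text{slack to absorb } f\cdot n + |X_0|\cdot n^2)$.

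The main obstacle I anticipate is the $x \in X_0$ case: unlike the clean parts, a vertex of $X_0$ has no a priori structure, so one must argue that \emph{no} adjacency pattern to the five large parts can be $\vec{P}_4$-poor. The tool for this is exactly that $G - X_0$ is a $\vec{C}_5$-blow-up with all five parts of size $\geq x_{min}$, so $x$ sees five large honest parts arranged in a directed $5$-cycle; a short case analysis on how $x$'s three-valued neighborhood (in/out/none) distributes over the cyclically-ordered parts should show that at least one ``spread'' configuration occurs, each contributing $\Theta(x_{min}^3)$ induced $\vec{P}_4$'s. I would also double-check that the degenerate patterns (e.g. $x$ with the same neighborhood as a clone of some $v_i$, which would put $x$ back in $X_i$ with $f_p(x)$ small, contradicting $x \in X_0$) are exactly the excluded ones, so the surviving cases are all $\vec{P}_4$-rich. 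Once the count $\Omega(n^3)$ with an explicit constant above $0.19$ is secured in every case, the claim follows, and this sets up the next step of the proof where the $0.19\binom{n}{3}$ lower bound will be played against an upper bound on $\vec{P}_4(G,x)$ for $x \in X_0$ to conclude $X_0 = \emptyset$.
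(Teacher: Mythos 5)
The central gap is your second case, $x \in X_0$: the statement you plan to prove there (``no adjacency pattern of an $X_0$-vertex to the five large parts can be $\vec{P}_4$-poor'') is false, and in fact the paper's very next claim establishes the opposite — using Claim~\ref{cl:x0funky} and the optimization program $(P)$, any vertex of $X_0$ lies in at most $0.08\binom{n}{3}$ induced $\vec{P}_4$'s, and that is precisely how $|X_0|=0$ is then deduced by contradiction with Claim~\ref{cl:many}. Nothing in $\vec{T}_3$-freeness, the forbidden list $\mathcal{F}$, or the fact that $G-X_0$ is a blow-up of $\vec{C}_5$ excludes, say, a vertex whose only neighbors are out-neighbors in a single part $X_1$; such a vertex lies in only about $(n/5)^3\approx 0.05\binom{n}{3}$ induced $\vec{P}_4$'s, yet it is consistent with all your structural hypotheses. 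Moreover, the constraint $f_p(x)\ge 0.17n$ from Claim~\ref{cl:x0funky} pushes the count \emph{down}, not up: being in many weird pairs means deviating from the clone patterns that generate almost all $\vec{P}_4$'s through a vertex. The only way to rule out poor vertices is to use the extremality of $G$ globally, which your case-2 argument never does, so that case cannot be closed by the proposed enumeration of neighborhood patterns.

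For contrast, the paper proves Claim~\ref{cl:many} for \emph{all} vertices at once, without any reference to $X_0$ or the blow-up structure: if $\vec{P}_4(G,x)-\vec{P}_4(G,y)>n^2$, deleting $y$ and adding a non-adjacent clone of $x$ keeps the graph $\vec{T}_3$-free and changes the count by $\vec{P}_4(G,x)-\vec{P}_4(G,y)-\vec{P}_4(G,\{x,y\})>0$, contradicting extremality; hence all per-vertex counts agree up to $n^2$, and since $\vec{P}_4(G)=\frac{24}{125}\binom{n}{4}+o(n^4)$, averaging gives $\vec{P}_4(G,x)\ge 0.192\binom{n}{3}+o(n^3)\ge 0.19\binom{n}{3}$. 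Note also that your first case is far tighter than ``comfortably above with room to spare'': the ideal count for a vertex in a part is about $4(n/5)^3=0.032n^3$ while $0.19\binom{n}{3}\approx 0.0317n^3$, leaving almost no slack for the losses you subtract, and your loss estimate for weird pairs at $x$ is off by roughly a factor of $n$ (a weird pair $\{x,y\}$ can destroy up to $\binom{n-2}{2}$ triples, i.e.\ $f_p(x)\binom{n}{2}$ in total, not $f_p(x)\cdot n$). The cloning-plus-averaging route avoids this delicate bookkeeping entirely.
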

\begin{proof}
First we show that all vertices of $G$ are in approximately the same number of induced $\vec{P}_4$'s.
Suppose to the contrary that $x$ and $y$ are two vertices such that $\vec{P}_4(G,x) - \vec{P}_4(G,y) >  n^2$.
Obtain $G'$ from $G$ by deleting $y$ and adding a clone of $x$, denoted by $x'$. 
Note that there is no edge between $x$ and $x'$ in $G'$.
If $x'$ was in a $\vec{T}_3$, then $x$ would also be in a $\vec{T}_3$ since $x$ and $x'$ are not adjacent to each other. 
Hence $G'$ is $\vec{T}_3$-free.
The only induced $\vec{P_4}$'s that are different in $G$ and $G'$ are the ones containing both $y$ and $x'$.
Hence
\[
\vec{P}_4(G') - \vec{P}_4(G) = \vec{P}_4(G,x) - \vec{P}_4(G,y) - \vec{P}_4(G,\{x,y\})  > 0,
\]
since $ \vec{P}_4(G,\{x,y\}) < n^2$. This contradicts that $\vec{P}_4(G)$ is maximum.
Hence for two arbitrary vertices $x$ and $y$, $|\vec{P}_4(G,x) - \vec{P}_4(G,y)| \leq  n^2$.
Since $\vec{P}_4(G) = 0.192\binom{n}{4} + o(n^4)$ and 
every $\vec{P_4}$ contains four vertices, the average number of $\vec{P_4}$'s containing one fixed vertex is $0.192\binom{n-1}{3} + o(n^3) = 0.192\binom{n}{3} + o(n^3)$.
Therefore, $\vec{P}_4(G,x) \geq 0.19\binom{n}{3}$ for every vertex $x$ when $n$ is sufficiently large.
\end{proof}

\begin{claim}
$|X_0| = 0$.
\end{claim}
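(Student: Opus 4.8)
The plan is to derive a contradiction from the assumption that $|X_0| \geq 1$ by showing that any vertex $x \in X_0$ lies in too few induced $\vec P_4$'s, contradicting Claim~\ref{cl:many}. First I would fix $x \in X_0$ and use Claim~\ref{cl:x0funky}: no matter which part $X_i$ we try to assign $x$ to, $x$ is in at least $0.17n$ weird pairs relative to that assignment. In other words, for every $i$, the adjacency of $x$ to $X_1,\dots,X_5$ differs from the ``ideal'' $\vec C_5$-pattern centered at part $i$ in at least $0.17n$ places. The goal is to turn this ``$x$ is far from every ideal vertex type'' statement into an upper bound on $\vec P_4(G,x)$.

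The key step is to count induced $\vec P_4$'s through $x$ by partitioning them according to the three other vertices. By Claim~\ref{cl:nofunky}, $G - X_0$ is an exact (unbalanced) blow-up of $\vec C_5$ with parts $X_1,\dots,X_5$ of size $\approx n/5$ each, so the induced $\vec P_4$'s of $G$ that avoid $X_0$ are completely understood. An induced $\vec P_4$ containing $x$ either (a) uses two or three vertices of $X_0 \setminus \{x\}$, of which there are $O(|X_0|^2 n) = O(n^2 \cdot |X_0|)$ — negligibly few since $|X_0| \le 0.001n$ and we will get a bound of order $n^3$; or (b) uses exactly $x$ from $X_0$ and three vertices from $X_1 \cup \dots \cup X_5$. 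For case (b), I would observe that $x$ together with three blow-up vertices $u_1,u_2,u_3$ forms an induced $\vec P_4$ only if $x$ ``plays the role'' of one of the five vertex types with respect to those three vertices; concretely, the three vertices occupy three of the five parts forming a sub-path pattern, and $x$'s adjacencies to those parts must match a vertex-type slot. The number of such $\vec P_4$'s is bounded by summing, over the (few) choices of which three parts are used and which slot $x$ fills, the number of triples consistent with $x$'s actual neighborhood; each such count is at most a product of sizes of ``correctly adjacent'' neighborhoods of $x$, which by the $0.17n$ discrepancy bound forces at least one factor to be at most $(0.2 - 0.17)n \cdot (1+o(1)) < 0.04n$ after accounting for which part is ``wrong''. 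Multiplying by the $O(1)$ number of part/slot configurations and the at most $x_{max}^2 < (0.2n)^2$ for the other two factors gives $\vec P_4(G,x) \le C n^3$ with $C$ comfortably below $0.19/6 = 0.19\binom{n}{3}/n^3 \cdot (1+o(1))$ — in fact one wants the final constant to beat $0.19/6 \approx 0.0317$, and $5 \cdot 0.04 \cdot 0.2 \cdot 0.2 = 0.008 < 0.0317$, so there is room to spare even being generous with lower-order terms and the case-(a) contribution.

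The main obstacle I expect is the bookkeeping in case (b): making precise the claim that ``$x$ in a weird pair count of at least $0.17n$ with every part assignment'' implies that in \emph{every} way $x$ could extend three blow-up vertices to an induced $\vec P_4$, one of the three neighborhood factors is small. The subtlety is that different induced-$\vec P_4$ configurations impose constraints on $x$'s adjacency to different pairs of parts, so one must verify that $0.17n$ worth of discrepancies spread over the five parts cannot all ``avoid'' the particular two or three parts relevant to a given configuration simultaneously for all configurations — but since there are only five parts and the discrepancy is a constant fraction of $n$ for \emph{each} of the five candidate assignments, a short averaging or pigeonhole argument handles it, at worst weakening the constant $0.04$ to something still well below the threshold. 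Once the bound $\vec P_4(G,x) = o(\binom{n}{3})$ (or just $< 0.19\binom{n}{3}$) is established, it contradicts Claim~\ref{cl:many}, forcing $|X_0| = 0$, which completes this claim.
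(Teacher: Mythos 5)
Your overall strategy is the same as the paper's: fix $x\in X_0$, use Claim~\ref{cl:nofunky} to understand all induced $\vec{P}_4$'s through $x$ that avoid $X_0$, feed in the lower bound of Claim~\ref{cl:x0funky} on the ``distance'' of $x$'s adjacency profile from every ideal $\vec{C}_5$-type, and contradict Claim~\ref{cl:many}. But the step where you convert the $0.17n$ discrepancy into the bound on $\vec{P}_4(G,x)$ has a genuine gap. Your key quantitative claim --- that for \emph{each} path configuration one of the three neighborhood factors is forced down to about $(0.2-0.17)n<0.04n$ --- is false: Claim~\ref{cl:x0funky} only says the total discrepancy is at least $0.17n$ for each candidate part, and that discrepancy can sit entirely in the two parts that a given configuration does not touch. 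For instance, a vertex $x$ with out-edges to all of $X_1$ and no other edges to $X_1\cup\dots\cup X_5$ is at distance at least $0.19n$ from every ideal type (for the $X_5$-type the entire $X_4$-side is wrong), yet in the configuration $x\to X_1\to X_2\to X_3$ all three factors are about $0.2n$. So there is no per-configuration small factor, and the pigeonhole/averaging argument you defer to is precisely the nontrivial content. Your arithmetic also undercounts the configurations: there are $4\times 5=20$ (position of $x$ times rotation), not $5$, and even granting the (false) per-term claim, $20\cdot 0.04\cdot 0.21^2\approx 0.035$ exceeds the threshold $0.19\binom{n}{3}/n^3\approx 0.032$, so the claimed ``room to spare'' is not there.

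What the paper actually does at this point is set up an explicit optimization program $(P)$ in the normalized quantities $i_j,o_j,n_j$ (in-, out-, and non-neighbors of $x$ in each $X_j$), with the part-size constraints and the five constraints coming from Claim~\ref{cl:x0funky}, whose objective is exactly the sum over the $20$ configurations. Crucially, it then uses $\vec{T}_3$-freeness of $G$ at $x$ (an out-edge from $x$ to $X_1$ kills all adjacency between $x$ and $X_5$ and all out-edges from $x$ to $X_2$, etc.) to reduce to a few cases with only four free variables each, and bounds the maximum by a computer-assisted mesh search combined with a derivative (Lipschitz) bound. Your proposal uses neither the $\vec{T}_3$-based structural reduction nor any substitute for the optimization, so as written it does not establish $\vec{P}_4(G,x)<0.19\binom{n}{3}$. (Minor additional slips: case (a) should include induced $\vec{P}_4$'s containing exactly one other vertex of $X_0$, though your $O(|X_0|n^2)$-type bound covers this; and $x$ need not ``match a vertex-type slot'' --- it only needs the right adjacencies to the three relevant parts, which is what makes the discrepancy argument fail.)
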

\begin{proof}
Suppose to the contrary that $x$ is a vertex in $X_0$.
We will show that $x$  violates Claim~\ref{cl:many}.

For $j \in [5]$, let $i_j$, $o_j$, and $n_j$ denote the number
of in-neighbors, out-neighbors, and non-neighbors, respectively, of $x$ in $X_j$ divided by $n$.
This allows us to count the number of induced $\vec{P}_4$'s containing $x$ and no other vertex from $X_0$.
To simplify the notation,  for all $j > 5$ we define $i_j=i_{j-5}$, $o_j=o_{j-5}$, and  $n_j=n_{j-5}$.
The following program provides an upper bound on the number of induced $\vec{P}_4$'s containing $x$ divided by $n^3$.
\def\XzerofunkyDeg{0.17}
\def\Xmax{0.21}
\def\Xmin{0.19}
\def\Xzeromax{0.001}

\[
(P)  \begin{cases}
\text{maximize}  
& \sum_{j=1}^5 \left(o_jn_{j+1}n_{j+2}+i_jo_{j+2}n_{j+3}+n_ji_{j+1}o_{j+3}+n_jn_{j+1}i_{j+3}\right) \\
\text{subject to}
&   i_j+o_j+n_j \le 0.21  \text{ for } j \in [5], \\
&   o_j+i_j+n_{j+1}+i_{j+1}+i_{j+2}+o_{j+2}+i_{j+3}+o_{j+3}+n_{j+4}+o_{j+4} \ge \XzerofunkyDeg  \text{ for } j \in [5],\\
& i_j,o_j,n_j\geq 0   \text{ for } j \in [5]. 
\end{cases}
\]

The objective in $(P)$ counts the number of induced $\vec{P}_4$'s containing $x$. 
The first set of constraints count relations between $x$ and vertices in each $X_j$.
Notice that we used a very generous upper bound on $|X_i|$.
The second set of constraints comes from Claim~\ref{cl:x0funky}, where we count the number of weird pairs containing $x$ if $x$ was in $X_j$.

We aim to provide an upper bound on the value of an optimal solution of $(P)$.
We do this by sampling points in the space of feasible solutions of $(P)$
and then upper bounding the maximum by using first derivatives.
Unfortunately, the program has ten variables, which seems to be too many for
generating a sufficiently refined grid.

Fortunately, the presence of some edges incident with $x$ blocks presence of other edges. 
If there are no edges from $x$ to $\bigcup_{i \in [5]} X_i$, then we can reverse all edges of $G$.
If there are still no edges from $x$ to $\bigcup_{i \in [5]} X_i$, then all neighbors of $x$ are  in $X_0$
and $x$ is in at most $|X_0|\binom{n}{2} < 0.001n^3$ induced $\vec{P}_4$'s, which contradicts Claim~\ref{cl:many}.

By symmetry, assume there is an edge directed from $x$ to a vertex in $X_1$.
This already prevents all edges between $x$ and $X_5$, and also edges from $x$ to $X_2$, since $G$ is $\vec{T}_3$-free.

All the possible combinations of allowed edges are depicted in Figure~\ref{fig-final_computation}, possible to verify by case analysis. 
In each of them, there are only four variables. 
We examine them separately, run a mesh optimization program, and show in the following
paragraphs that $v$ is in at most $0.08 \binom{n}{3}$  induced $\vec{P_4}$'s, which contradicts Claim~\ref{cl:many}.

The mesh optimization program works in the following way. 
For each variable, it samples 100 points uniformly distributed in $[0,0.21]$.
That means examining $100^4$ points.
For each of the points, we test if it is a feasible solution to $(P)$ and if yes,
then we remember the  solution with the highest value of the objective function of $(P)$.
The optimal solution of $(P)$ must be in each coordinate at distance at most $0.21/100$ from some
point we sampled\footnote{We actually also sample points that slightly violate the constraints of $(P)$ to make sure our grid captures the optimal solution if it is on the boundary of feasible solutions of $(P)$.}.
The largest value among the sampled feasible points is less than $0.04$.

The first partial derivative of the objective function in any variable is at most $6\cdot 0.21^2 = 2.52$.
Hence the difference between the point and the optimum is at most $4\cdot  2.52 \cdot \frac{0.21}{100} < 0.03$,
and the value of the optimum solution is at most $0.07$.

This implies that $x$ is in at most $0.07 \binom{n}{3}$  induced $\vec{P_4}$'s that avoid $X_0$.
There are at most $|X_0|\binom{n}{2}$ other  induced $\vec{P_4}$'s containing $x$. 
Hence there are at most $0.08 \binom{n}{3}$  induced $\vec{P_4}$'s containing $x$, which contradicts Claim~\ref{cl:many}.

\begin{figure}
\begin{center}
\includegraphics[page=1]{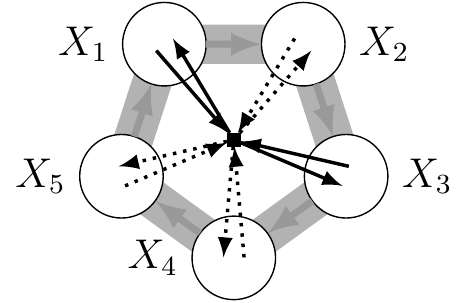}
\includegraphics[page=2]{fig-final_computation}\\[10pt]
\includegraphics[page=3]{fig-final_computation}
\includegraphics[page=4]{fig-final_computation}
\end{center}
\caption{Possible neighborhoods of a vertex $x$, depicted in the center, in $X_0$. If solid edges are present, then dashed edges are not present. The solid edges cover all options if there is an edge from $x$ to a vertex in $X_1$.}\label{fig-final_computation}
\end{figure}

\end{proof}

\begin{claim}
$G$ is a balanced blow-up. That is, $||X_i| - |X_j|| \leq 1$ for all $i, j\in[5]$.
\end{claim}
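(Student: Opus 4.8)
By the previous claim $|X_0|=0$, so $V(G)=X_1\cup\cdots\cup X_5$, and Claim~\ref{cl:nofunky} applied with $X_0=\emptyset$ says that $G$ is exactly a (not necessarily balanced) blow-up of $\vec{C}_5$ with parts $X_1,\dots,X_5$ taken in cyclic order. The first step is to write down $\vec{P}_4(G)$ exactly. In a blow-up of $\vec{C}_5$ no two vertices of the same part are adjacent, so the four vertices of an induced $\vec{P}_4$ lie in four distinct parts; among the $\binom{5}{4}$ choices of four parts from the $5$-cycle, only the four \emph{consecutive} ones induce a path on their representatives, and that path is automatically oriented as $\vec{P}_4$; conversely, any choice of one vertex from each of four cyclically consecutive parts yields an induced $\vec{P}_4$. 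Hence, indexing cyclically modulo $5$,
\[
\vec{P}_4(G)=\sum_{i=1}^{5}|X_i|\,|X_{i+1}|\,|X_{i+2}|\,|X_{i+3}|
=\left(\prod_{i=1}^{5}|X_i|\right)\left(\sum_{i=1}^{5}\frac{1}{|X_i|}\right).
\]

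\textbf{Smoothing step.} Suppose, for contradiction, that $|X_j|\ge |X_k|+2$ for some $j\neq k$. Let $G^*$ be the blow-up of $\vec{C}_5$ obtained from $G$ by moving one vertex from $X_j$ to $X_k$; since every blow-up of $\vec{C}_5$ is triangle-free, $G^*$ is again a $\vec{T}_3$-free oriented graph on $n$ vertices, hence a legitimate competitor. Write $P=\prod_{i\notin\{j,k\}}|X_i|>0$, $S=\sum_{i\notin\{j,k\}}\frac1{|X_i|}>0$, and $s=|X_j|+|X_k|$. Using the formula above and the identity $ab\bigl(S+\frac1a+\frac1b\bigr)=abS+s$ with $a=|X_j|$, $b=|X_k|$, we get $\vec{P}_4(G)=P\bigl(|X_j|\,|X_k|\,S+s\bigr)$ and $\vec{P}_4(G^*)=P\bigl((|X_j|-1)(|X_k|+1)S+s\bigr)$, so
\[
\vec{P}_4(G^*)-\vec{P}_4(G)=P\,S\,\bigl(|X_j|-|X_k|-1\bigr)\ge P\,S>0,
\]
contradicting the extremality of $G$. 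Therefore $\bigl||X_i|-|X_j|\bigr|\le 1$ for all $i,j\in[5]$. Since $\sum_{i=1}^5|X_i|=n$, each part has size $\lceil n/5\rceil$ or $\lfloor n/5\rfloor$, so $G$ is a balanced blow-up of $\vec{C}_5$; in particular, when $5\mid n$ every part has size $n/5$ and $G=\vec{C}_5(n)$, which together with Lemma~\ref{lem:FA} completes the proof of Theorem~\ref{thm-main}.

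\textbf{Where the difficulty lies.} There is no serious obstacle here: the whole argument reduces to the exact count of induced $\vec{P}_4$'s in a blow-up of $\vec{C}_5$, after which the optimization is the elementary fact that a product of two positive integers with fixed sum is maximized when they are as equal as possible. The only points needing a little care are (i) the claim that an induced $\vec{P}_4$ in such a blow-up must occupy four cyclically consecutive parts, with necessity, sufficiency, and correct orientation all checked, and (ii) the routine verification that moving a single vertex between parts keeps the graph inside the class $\mathcal{\vec T}$, which is immediate since blow-ups of $\vec{C}_5$ contain no triangle at all.
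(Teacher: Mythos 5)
Your proof is correct and is essentially the paper's argument: the paper also performs the same smoothing step, phrased as deleting a vertex $y$ from the larger part and adding a clone of a vertex $z$ in the smaller part, and its computed gain $(x_i-x_j-1)(x_ax_b+x_ax_c+x_bx_c)$ is exactly your $PS\,(|X_j|-|X_k|-1)$. The only nit is that your one-line justification that the four vertices of an induced $\vec{P}_4$ lie in four distinct parts should invoke that same-part vertices are clones (or that in-neighbors of a part sit in a single other part), since mere non-adjacency does not rule out coincident parts; this is immediate to repair.
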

\begin{proof}
Suppose to the contrary that there exist $i$ and $j$ such that $|X_i| - |X_j| \geq 2$. Let $\{a,b,c\} = [5] \setminus \{i,j\}$.

We will obtain a contradiction by deleting a vertex in $X_i$ and duplicating a vertex in $X_j$, and show that this will increase the number  of induced $\vec{P_4}$'s.
Let $y \in X_i$ and $z \in X_j$.
Obtain $G'$ from $G$ by deleting $y$ and adding a clone of $z$, denoted by $z'$.
If $z'$ was in a $\vec{T}_3$, then $z$ would also be in a $\vec{T}_3$ as $z$ and $z'$ are not adjacent to each other.
Hence $G'$ is $\vec{T}_3$-free.
For $w \in [5]$, let $x_w=|X_w|$.
By checking all possible embeddings of an induced $\vec{P}_4$, we calculate
\begin{align*}
 \vec{P_4}(G,z) &= x_ax_bx_c +  x_ax_bx_i +  x_ax_ix_c +  x_ix_bx_c\\
 \vec{P_4}(G,y) &= x_ax_bx_c +  x_ax_bx_j +  x_ax_jx_c +  x_jx_bx_c\\
 \vec{P_4}(G,\{z,y\}) &= x_ax_b + x_ax_c + x_bx_c
\end{align*}
Notice that the induced $\vec{P_4}$'s containing only $z'$ or $y$ contribute to the difference of the number of induced $\vec{P_4}$'s in $G'$ and $G$. 
In particular,
\begin{align*}
\vec{P_4}(G') - \vec{P_4}(G) &= \vec{P_4}(G,z) - \vec{P_4}(G,y) - \vec{P_4}(G,\{z,y\})\\
&=   x_i (x_ax_b+  x_ax_c +  x_bx_c)   -  x_j(x_ax_b +  x_ax_c +  x_bx_c) - x_ax_b - x_ax_c -x_bx_c\\
&=   (x_i-x_j-1) (x_ax_b+  x_ax_c +  x_bx_c)\\
&\geq x_ax_b+  x_ax_c +  x_bx_c > 0,
\end{align*}
which contradicts that $\vec{P_4}(G)$ is maximum.
\end{proof}

The last remaining part of the proof of Theorem~\ref{thm-main} is to show that it holds for all $n$ divisible by $5$.
Assume that $n=5\ell$ for some $\ell \geq 1$ and $G$ is a graph maximizing the number of induced $\vec{P}_4$ among
all $n$-vertex graphs. 
Our goal is to show that $G$ is isomorphic to $\vec{C}_5(n)$, which is a balanced blow-up of $\vec{C}_5$ on $n$ vertices.
By the extremality of $G$, we get
\[
\vec{P}_4(G) \geq \vec{P}_4(\vec{C}_5(n)) = 5\ell^4.
\]
Now consider a blow-up $B$ of $G$, where every vertex of $G$ is replaced by $j$ vertices. 
That is, $B$ has $5j\ell$ vertices. 
Every $\vec{P}_4$ in $G$ yields $j^4$ copies of $\vec{P}_4$ in $B$. 
Hence $\vec{P}_4(B) = 5j^4\ell^4$.
If $j$ is sufficiently large, we have already proved, that $\vec{C}_5(5j\ell)$ is the unique extremal construction.
Hence
\[
5j^4\ell^4 = \vec{P}_4(\vec{C}_5(5j\ell)) \geq  \vec{P}_4(B) = 5j^4\ell^4.
\]
Therefore, $B$ is isomorphic to $\vec{C}_5(5j\ell)$. 
Since $B$ was obtained as a blow-up of $G$, we conclude that $G$ is isomorphic to $\vec{C}_5(n)$.
This finishes the proof of Theorem~\ref{thm-main}.
\end{proof}

\section{Longer directed paths}\label{longpaths}

We use methods developed for determining the inducibility of cycles in non-oriented graphs in order to obtain bounds for oriented paths of arbitrary length. 
The first general upper bound shown in Lemma~\ref{cl:PG} utilizes an approach by Pippenger and Golumbic~\cite{PippengerGolumbic1975}. 
Hefetz and Tyomkyn~\cite{HefetzTyomkyn} developed a more complicated approach, and Kr\'a\soft{l}, Norin, and Volec~\cite{KNV} recently improved the result via a simple counting argument. 
Similar technique was in~\cite{GrKi2019On} by Gresnik and Kielak. 
We use the method from \cite{KNV} for $\vec{T}_3$-free graphs, as proven in Lemma~\ref{lem:KNV}.

\begin{lemma}
\label{cl:PG}
\[
I(\vec{P}_k) \leq \frac{k!}{(k-1)^{k-1}}
\]
\end{lemma}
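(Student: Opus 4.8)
The plan is to adapt the classical Pippenger--Golumbic argument, which bounds the inducibility of a graph $H$ in terms of how many induced copies of $H$ can share a fixed large ``core.'' Concretely, I would fix a large oriented graph $G$ on $n$ vertices that is close to extremal for induced copies of $\vec P_k$, pick a subset $S \subseteq V(G)$ of size $k-1$ uniformly at random, and count induced copies of $\vec P_k$ that contain $S$ as a prescribed subset of $k-1$ of their vertices. The key structural observation is the one that makes directed paths so much more tractable than undirected ones: given any $k-1$ vertices that occupy specified positions in a putative $\vec P_k$, the identity of the missing vertex is heavily constrained. If we know the vertices in positions $1,\dots,k-1$ of the path $v_1 \to v_2 \to \cdots \to v_k$, then the $k$-th vertex $v_k$ must be an out-neighbor of $v_{k-1}$ and a non-neighbor of $v_1,\dots,v_{k-2}$; crucially, for a directed path the \emph{orientation} information pins down which of the at most $k$ ``insertion slots'' we are in. I would set up the double count of pairs $(S, \text{induced } \vec P_k \supseteq S)$ and observe that each induced $\vec P_k$ is counted exactly $k$ times (once for each vertex deleted), while for a fixed ordered $(k-1)$-tuple sitting inside some induced $\vec P_{k-1}$-like configuration, the number of completions to an induced $\vec P_k$ is at most $n$ in the crudest bound, but the averaging must be organized so the $(k-1)$-st power of $n$ appears with the right constant.

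The cleanest way to get the stated constant $k!/(k-1)^{k-1}$ is to run the recursion on path length. Let $p_k(G)$ denote the number of induced copies of $\vec P_k$ in $G$ and, for a vertex $v$, let $p_k(G,v)$ count those using $v$ in a fixed endpoint position. First I would show $\sum_{v} (\text{appropriate endpoint count}) $ relates $p_k$ to $p_{k-1}$: every induced $\vec P_k$ restricts (by deleting an endpoint) to an induced $\vec P_{k-1}$, and conversely each induced $\vec P_{k-1}$ extends in at most $n$ ways, but one extracts a better bound by noting that an induced $\vec P_{k-1}$ has two endpoints and the extension attaches at one of them with a controlled non-adjacency condition to the rest. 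Iterating, $p_k(G) \le \frac{(\text{something})}{(k-1)^{k-1}} n^k + o(n^k)$, where the numerator bookkeeping produces $k!$; dividing by $\binom{n}{k}$ and letting $n\to\infty$ yields $I(\vec P_k)\le k!/(k-1)^{k-1}$. I would be careful to track the symmetry factor: $\vec P_k$ has an automorphism reversing it, so each unlabeled induced copy corresponds to $k!/2$ labeled ones and there are $2$ ways to read off a path, and these factors should cancel cleanly against the combinatorial bound rather than improving it.

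I expect the main obstacle to be getting the constant \emph{exactly} $(k-1)^{k-1}$ rather than something slightly weaker like $(k-1)!$ or $k^{k-1}$ in the denominator. The naive ``each induced $\vec P_{k-1}$ extends in $\le n$ ways, each induced $\vec P_k$ deletes to $\le 2$ induced $\vec P_{k-1}$'s'' recursion loses too much. The sharp argument (as in Pippenger--Golumbic and refined by Kr\'a\soft{l}--Norin--Volec) requires instead fixing the $k-1$ ``middle'' vertices and distributing the remaining $n-(k-1)$ vertices across the $k$ slots of the path with a convexity/AM-GM step: if $a_1,\dots,a_k$ are the numbers of vertices landing in each slot then the induced-$\vec P_k$ count is $\prod a_i \le (n/k)^k$ type bounds must be replaced by the correct weighting, because slot populations are not free — a vertex adjacent ``correctly'' to the fixed middle can only sit in slots consistent with its adjacency pattern, and in a $\vec T_3$-free or general oriented graph the number of admissible slots for each vertex is what forces the $(k-1)$. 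So the crux is the precise local classification of how an outside vertex can attach to a fixed induced $\vec P_{k-1}$, followed by the right application of the AM-GM inequality; everything else is routine averaging and an appeal to the limit definition of $I(\vec P_k)$.
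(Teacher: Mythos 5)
There is a genuine gap: your proposal never carries out the step that produces the bound. You correctly locate the difficulty --- ``the crux is the precise local classification of how an outside vertex can attach \dots followed by the right application of the AM--GM inequality'' --- but you leave exactly that step undone, and the route you lean toward for it is not the one that gives this constant. Fixing the $k-1$ ``middle'' vertices and counting the admissible slots of each outside vertex is the refinement used for Lemma~\ref{lem:KNV}; that argument yields the denominator $k^{k-1}$ and genuinely needs $\vec{T}_3$-freeness (without it an outside vertex can contribute to all $k$ orderings), so it is not what ``forces the $(k-1)$.'' The bound $k!/(k-1)^{k-1}$ comes from a much simpler greedy count: build an induced directed path $v_1,\dots,v_k$ one vertex at a time, letting $w_i$ be the number of candidates for $v_i$ given the prefix, so $w_1=n$, $w_2=|N^+(v_1)|$, $w_3=|N^+(v_2)\setminus N(v_1)|$, and so on. The key observation your sketch is missing is that along any one path the candidate sets for positions $2,\dots,k$ are pairwise \emph{disjoint}: a candidate for position $j>i$ must be a non-neighbor of $v_{i-1}$, while every candidate for position $i$ lies in $N^+(v_{i-1})$. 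Hence $\sum_{i=2}^k w_i\le n$, and AM--GM gives $\prod_{i=2}^k w_i\le \left(\frac{n}{k-1}\right)^{k-1}$; with $n$ choices for $v_1$ the number of induced copies is at most $n^k/(k-1)^{k-1}$, and dividing by $\binom{n}{k}$ and letting $n\to\infty$ gives the lemma. (To make the product of branch-dependent $w_i$'s rigorous, weight each completed path by $\prod_{i\ge 2} 1/w_i$ and note these weights sum to at most $n$, exactly as in the weighting argument of Lemma~\ref{lem:KNV}; AM--GM then lower-bounds each weight by $(k-1)^{k-1}/n^{k-1}$.)

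Two smaller points. The recursion relating $p_k$ to $p_{k-1}$ cannot be the backbone of the argument, as you note yourself, since extending an induced $\vec{P}_{k-1}$ in at most $n$ ways loses too much. And the symmetry bookkeeping you flag ($k!/2$ labelled copies, ``two ways to read off a path'') is an artifact of undirected paths: $\vec{P}_k$ has trivial automorphism group, since reversal flips all edge orientations, so each induced copy corresponds to exactly one directed traversal and no factor of $2$ appears anywhere.
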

\begin{proof}
Let $G$ be an oriented graph on $n$ vertices.  
We try to build a path $v_1,\ldots,v_k$ by starting at $v_1$ and trying to append one vertex at a time.
We can choose $v_1$ to be any of the $n$ vertices.
Now in each step, let $w_i$ be the number of candidates for $v_i$.
That is, $w_1 = n$, $w_2 = |N^+(v_1)|$, $w_3 = |N^+(v_2) \setminus N(v_1)|$, and so on. 
Then, the total number of choices to build a path on $k$ vertices is
\[
\prod_{i=1}^k w_i =  n \cdot \prod_{i=2}^k w_i \leq n  \left(\frac{n}{k-1}\right)^{k-1} = \frac{n^{k}}{(k-1)^{k-1}}.
\]
Therefore,
\[
I(\vec{P}_k)  \leq \lim_{n \to \infty} \frac{\frac{n^{k}}{(k-1)^{k-1}}}{\binom{n}{k}} =  \frac{k!}{(k-1)^{k-1}}.
\]
\end{proof}

In the above proof of Lemma~\ref{cl:PG}, the worst case of the calculation is achieved when $w_i = \frac{n}{k-1}$ for all $i$. 
Instead of naively building the path, we will consider different orderings of the path (this is a trick inspired by~\cite{KNV}) in order to modify the worst case to be $w_i = \frac{n}{k}$ for all $i$.
This gives a further improvement on the bound, but it falls short of the best known construction, a blow-up of a $\vec{C}_{k+1}$. 

\begin{lemma}\label{lem:KNV}
\[
I(\vec{P}_k, \mathcal{\vec T}) \leq \frac{k!}{k^{k-1}},
\]
where $\mathcal{\vec T}$ is the family of $\vec{T_3}$-free oriented graphs. 
\end{lemma}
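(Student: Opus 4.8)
The plan is to mimic the proof of Lemma~\ref{cl:PG} but to average the path-building bound over all $k$ cyclic rotations of the vertex ordering, exploiting $\vec{T}_3$-freeness to rule out the bad configuration that would otherwise make one rotation cheap. Fix an oriented $\vec{T}_3$-free graph $G$ on $n$ vertices, and as before let $w_i$ denote the number of candidates for $v_i$ when greedily extending a fixed induced path $v_1,\dots,v_k$: $w_1=n$, and $w_i=|N^+(v_{i-1})\setminus N(v_1)\setminus\dots\setminus N(v_{i-2})|$ for $i\ge 2$. The product $\prod_{i=1}^k w_i$ overcounts the induced copies of $\vec{P}_k$ (each copy is counted at most once in this greedy process, some not at all), so the number of induced $\vec{P}_k$'s is at most $\prod_i w_i$. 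The key new idea is that this estimate can be run starting from \emph{any} of the $k$ vertices of the path as ``$v_1$'', reading off the path cyclically; call the resulting products $Q_0,Q_1,\dots,Q_{k-1}$, where $Q_j$ starts the greedy process at position $j+1$ of the path. Since each $Q_j$ is an upper bound for the count, so is $\min_j Q_j$, and hence so is the geometric mean $\left(\prod_{j=0}^{k-1} Q_j\right)^{1/k}$.

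Next I would analyze the product $\prod_{j=0}^{k-1} Q_j$ and show it is at most $(n^k/k^{k-1})^k$, which upon taking the $k$-th root and dividing by $\binom{n}{k}$ yields the claimed bound $I(\vec{P}_k,\mathcal{\vec T})\le k!/k^{k-1}$ in the limit. Each factor $n$ (the ``$w_1$'' of each rotation) appears once per rotation, giving $n^k$ overall, and the remaining $k(k-1)$ factors are sizes of certain neighborhood-minus-previous-neighborhoods sets, each trivially at most $n$; so the bound $n^k\cdot n^{k(k-1)}=n^{k^2}$ is immediate but too weak. To get the factor $k^{k-1}$ per rotation — equivalently $k^{k(k-1)}$ in the denominator — the plan is to find, for each rotation, a partition-type inequality, and then combine across rotations. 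The cleanest route: for a fixed rotation with start vertex $u$, the sets $N^+(u)$, $N^+(v)\setminus N(u)$ along the path, etc., together with $\{u\}$ itself and the ``leftover'' vertices, are pairwise disjoint subsets of $V(G)$, so their sizes sum to at most $n$; thus by AM--GM the product of the $k-1$ nontrivial factors (or $k$ factors including a slack term) is at most $(n/k)^{k-1}$ times something, giving $Q_j\le n\cdot(n/k)^{k-1} \cdot(\text{correction})$. In the non-oriented cycle argument of~\cite{KNV} the correction terms coming from different rotations are shown to multiply to at most $1$ precisely because a vertex that is ``wasted'' (already a neighbor of an earlier vertex) in one rotation is productively used in another; I would adapt that bookkeeping to the directed setting.

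The role of $\vec{T}_3$-freeness is exactly to control which vertices can be ``wasted.'' In a general oriented graph, the greedy set $N^+(v_{i-1})\setminus\bigcup_{j<i-1}N(v_j)$ can be far smaller than $N^+(v_{i-1})$ because of many backward/forward adjacencies to earlier path vertices; $\vec{T}_3$-freeness limits such adjacencies — in particular, if $v_{i-1}\to x$ and also $v_{i-2}\to x$ then $\{v_{i-2},v_{i-1},x\}$ would need $v_{i-2}\to v_{i-1}$ (true) plus no edge completing a transitive triangle, constraining the picture — and this is what lets the per-rotation correction terms telescope to something $\le 1$ across the $k$ rotations, so that the worst case becomes the balanced one $w_i=n/k$. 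Concretely I expect the argument to produce, for each ordered pair (rotation $j$, position $i$), a term $a_{j,i}$ with $\sum_i a_{j,i}\le n$ for each $j$ and $\prod_j a_{j,i_0}\le \prod_j(n/k)$ after using that the $k$ rotations collectively see each vertex in each ``slot'' once.

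The main obstacle I anticipate is precisely this last combinatorial bookkeeping: making rigorous the claim that summing the greedy deficiencies $|N^+(v_{i-1})| - |N^+(v_{i-1})\setminus N(v_1)\setminus\cdots|$ over all $k$ rotations is controlled, using only $\vec{T}_3$-freeness and not full transitivity. In the undirected $C_k$ setting of~\cite{KNV} the symmetry of the cycle makes each vertex of the pattern interchangeable; for $\vec{P}_k$ the two endpoints behave differently from internal vertices and from each other (one is a source-end, one a sink-end), so the cyclic-rotation trick is being applied to a pattern that is genuinely a path, not a cycle, and I would need to check that rotating the \emph{reading order} of a path (rather than rotating a cyclic pattern) still yields $k$ valid upper bounds — it does, since each rotation is just a different greedy traversal order of the same fixed labeled path — and that the AM--GM step lands on $(n/k)^{k-1}$ rather than $(n/(k-1))^{k-1}$, which is exactly where the extra disjoint set (the singleton $\{u\}$, or the leftover vertices) must be accounted for. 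I would handle the boundary/endpoint asymmetry by treating the set of vertices ``never selected by the greedy process'' as one additional block in the disjointness count, restoring the clean $k$-fold balance.
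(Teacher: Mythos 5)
Your proposal is a plan whose decisive step is exactly the part you defer, and the specific set-up you choose makes that step harder than it needs to be. First, the orderings: you rotate the \emph{reading order} cyclically, $v_{j+1},\dots,v_k,v_1,\dots,v_j$. At the wrap-around step the next vertex ($v_1$) is required only to be \emph{non-adjacent} to everything placed so far; its candidate set is not contained in any in- or out-neighborhood of a placed vertex, can have size $\Theta(n)$, and is not disjoint from the later candidate sets in the way the Pippenger--Golumbic argument needs. This breaks the per-rotation ``disjoint sets summing to at most $n$, then AM--GM'' structure that your sketch relies on. The paper avoids this by using, for each $i$, the picking order $v_i,v_{i-1},\dots,v_1,v_{i+1},\dots,v_k$: the placed vertices always form a contiguous subpath and every new vertex attaches at an end, so each candidate set after the first sits inside an in- or out-neighborhood of an already placed vertex. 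Second, the quantities $Q_j$ as you define them (products of choice-dependent greedy counts for a fixed copy) are not well-defined upper bounds on the global count; the paper formalizes the averaging with weights $w(D_i(T))=\prod_j 1/n_{i,j}$ and the observation that, for each fixed ordering, the total weight of good $k$-tuples is at most $1$, so the $k$ orderings give total weight at most $k$. The whole game is then to show that each induced $\vec{P}_k$ contributes total weight at least $k^k/n^k$, which by AM--GM reduces to the inequality $\sum_{i=1}^{k}\bigl(n_{i,2}+\cdots+n_{i,k}\bigr)\le n(k-1)$.

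That inequality is the missing combinatorial bookkeeping, and it is where $\vec{T}_3$-freeness enters --- in a different way than you indicate. One shows: every vertex $x$ is a candidate at most once per ordering (the candidate sets within one ordering are pairwise disjoint, by their adjacency patterns to the already placed contiguous subpath), and there is at least one ordering in which $x$ is never a candidate. For the latter, let $a$ and $b$ be the smallest and largest path indices adjacent to $x$; if $x\to v_a$ one uses the ordering started at $v_1$, if $v_b\to x$ the one started at $v_k$, and otherwise $v_a\to x$ and $x\to v_b$, whence $x$ is non-adjacent to $v_{a+1}$ (any edge between $x$ and $v_{a+1}$ would complete a transitive triangle with $v_a\to v_{a+1}$, $v_a\to x$), so $x$ contributes nothing to the ordering started at $v_{a+1}$. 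Your invocation of $\vec{T}_3$-freeness (consecutive path vertices cannot share an out-neighbor) does not by itself produce this ``one ordering sees nothing from $x$'' statement, and your hope that per-rotation ``correction terms telescope to at most $1$'' is precisely the unproven core; note also that the gain from $(k-1)^{k-1}$ to $k^{k-1}$ comes from the bound $n(k-1)$ on the sum across all $k$ orderings, not from adding a leftover block to a single rotation's disjointness count as you propose. As written, the argument does not close.
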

\begin{proof}
This proof follows the approach developed in \cite{KNV}.
Let $G$ be an oriented graph on $n$ vertices.
Let  $T=(z_1,\ldots,z_k)$ be a $k$-tuple of vertices of $G$.
We will consider $D_1(T), \ldots, D_k(T)$, where $D_i(T)$ denotes the following permuted $k$-tuple of $T$: 
\[
z_i,z_{i-1},\ldots, z_3,z_2,z_1,z_{i+1},z_{i+2},\ldots,z_k.
\]
Intuitively, we will think of the sequence  $z_1,\ldots,z_k$ as
an order of picking the vertices and $D_i(T)$ as an order in which these vertices form a copy of $\vec{P}_k$.
We define a weight $w$ as
\[
w(D_i(T)) = \prod_{j = 1}^k \frac{1}{n_{i,j}},
\]
where $n_{i,1} = n$ and $n_{i,j}$ is the number of possible candidates for $z_j$ given that $z_1,\ldots,z_{j-1}$ are already chosen 
and the copy of $\vec{P}_k$ is being built according to $D_i(T)$.

For a fixed $D_i$, we call a $k$-tuple $T$ \emph{good}, if the ordering of the vertices in $D_i(T)$ induces a copy of $\vec{P}_k$.
Let $D_i$ be fixed. 
By using reverse induction on $m$, the sum of the weights of all good $k$-tuples $(z_1,\ldots,z_k)$ with respect to $D_i$
that starts with $(z_1,\ldots,z_m)$ is at most $\prod_{j = 1}^m \frac{1}{n_{i,j}}$.
Hence, the total sum of weight of all good $k$-tuples with respect to $D_i$ is at most $1$.

By summing over all $i \in \{1, \ldots, k\}$, we conclude that
the sum of all weights of all $k$-tuples that are good for at least one $D_i$ is at most $k$.

Let $v_1,v_2,\ldots,v_k$ be an induced $\vec{P}_k$ in $G$.
For $i \in \{1, \ldots, k\}$, let 
\[
 T_i  = (v_i,v_{i-1},\ldots,v_2,v_1,v_{i+1},v_{i+2},\ldots,v_{k}).
 \]
 Notice that $T_i$ is a good $k$-tuple for $D_i$.
 We will later show that
 \begin{align}\label{eq:kk}
 \frac{k^k}{n^k} \leq w(D_1(T_1)) + w(D_2(T_2)) + \cdots + w(D_k(T_k)).
 \end{align}
Since the contribution to the sum of the weights of all good $k$-tuples is at least $\frac{k^k}{n^k}$ for each $\vec{P}_k$ in $G$, and the total sum is at most $k$, we conclude that the number of induced $\vec{P}_k$'s is at most $\frac{n^k}{k^{k-1}}$. 
By considering the limit, we get
\[
I(\vec{P}_k)  \leq \lim_{n \to \infty} \frac{\frac{n^{k}}{k^{k-1}}}{\binom{n}{k}} =  \frac{k!}{k^{k-1}}.
\]
It remains to prove~\eqref{eq:kk}. 
We will use the AM-GM inequality twice. 
The first use is
\begin{align}
\left(\prod_{i=1}^{k} w(D_i(T_i))  \right)^{\frac{1}{k}}  \leq \frac{w(D_1(T_1) + \cdots + w(D_k(T_k))}{k}. \label{eq:D}
\end{align}
The second comes in
\begin{align}
\left(\prod_{i=1}^{k} \frac{1}{w(D_i(T_i))}  \right)^{\frac{1}{k(k-1)}} 
&= \left(\prod_{i=1}^{k} n \cdot n_{i,2} \cdot n_{i,3} \cdots n_{i,k}   \right)^{\frac{1}{k(k-1)}}\nonumber \\
&=  n^{\frac{1}{k-1}} \cdot \left(\prod_{i=1}^{k} n_{i,2} \cdot n_{i,3} \cdots n_{i,k}   \right)^{\frac{1}{k(k-1)}} \nonumber \\
&\leq \frac{n^{\frac{1}{k-1}}}{k(k-1)} \cdot \left(\sum_{i=1}^{k} \left(n_{i,2} +  \cdots + n_{i,k}  \right) \right)  \label{eq:k}
\end{align}
Our next goal is to show that every vertex $x$ of $G$ can contribute at most one to $n_{i,2}+\cdots+n_{i,k}$ for each $i$ in \eqref{eq:k}, and moreover,
that there is one $i$, where $x$ does not contribute at all. This would give that the big sum in \eqref{eq:k} is upper bounded by $n(k-1)$.

If $x$ has no neighbors among $v_1,\ldots,v_k$, then it does not contribute at all.
Let $a$ be the smallest index such that $v_a$ and $x$ are adjacent. 
If $xv_a \in E(G)$, then $x$ does not contribute to $w(D_1(T_1))$. 
Let $b$ be the largest index such that $v_b$ and $x$ are adjacent. 
If $v_bx \in E(G)$, then $x$ does not contribute to $w(D_k(T_k))$.
Hence assume $v_ax \in E(G)$ and $xv_b \in E(G)$.
Since $G$ is $\vec{T}_3$-free, $x$ is not adjacent to $v_{a+1}$ and hence it does not contribute to $w(D_{a+1}(T_{a+1}))$.
Notice that if $G$ was not $\vec{T}_3$-free, then it might be the case that $b = a+1$ and $x$ would contribute to $w(D_{i+1}(T_{i+1}))$ for all $i \in \{1, \ldots, k\}$.

By replacing the big sum in \eqref{eq:k} by its upper bound $n(k-1)$ we obtain
\begin{align}
\left(\prod_{i=1}^{k} \frac{1}{w(D_i(T_i))}  \right)^{\frac{1}{k(k-1)}} &\leq \frac{n^{\frac{1}{k-1}}}{k(k-1)} \cdot n(k-1) = \frac{n^{\frac{k}{k-1}}}{k} \nonumber \\
\left(\prod_{i=1}^{k} \frac{1}{w(D_i(T_i))}  \right)^{\frac{1}{k}} &\leq  \frac{n^k}{k^{k-1}} \label{eq:i}
\end{align}
By combining the reciprocal  of \eqref{eq:i} with \eqref{eq:D}, which has been multiplied by $k$, we obtain
\[
 \frac{k^{k}}{n^k}  \leq  k\cdot \left(\prod_{i=1}^{k} {w(D_i(T_i))}  \right)^{\frac{1}{k}} \leq  
 w(D_1(T_1)) + w(D_2(T_2)) + \cdots + w(D_k(T_k)),
 \]
which proves \eqref{eq:kk} and finishes the proof of Lemma~\ref{lem:KNV}.
\end{proof}

\section{Conclusion}

Flag algebra calculations support Conjectures~\ref{conj:iter} and \ref{conjP4T3} for other small values of $k$.
For Conjecture~\ref{conj:iter}, we compute the following bounds:
\[
 0.1935483870 \approx \frac{6}{31} \leq I(\vec{P}_4) \leq 0.19356
\]
\[
 0.092664092  \approx \frac{24}{259} \leq I(\vec{P}_5) \leq 0.092676
\]
\[
0.0428418421  \approx \frac{120}{2801} \leq I(\vec{P}_6) \leq 0.04323  
\]

When restricted to $\vec T_3$-free oriented graphs, we get the following exact results for paths on five and six vertices, respectively: 
\begin{theorem}
For the family $\mathcal{\vec T}$ of $\vec{T}_3$-free oriented graphs, 
\begin{align*}
I(\vec{P}_5,\mathcal{\vec T}) &=  \frac{5!}{6^4}  & I(\vec{P}_6,\mathcal{\vec T}) &=  \frac{6!}{7^5}
\end{align*}
\end{theorem}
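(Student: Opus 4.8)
The plan is to establish matching lower and upper bounds on $I(\vec{P}_k,\mathcal{\vec T})$ for $k\in\{5,6\}$, the common value being $\frac{k!}{(k+1)^{k-1}}$; this is exactly the value predicted by Conjecture~\ref{conjP4T3}. Since the statement only concerns the asymptotic inducibility (no uniqueness for finite $n$ is claimed), the stability/exactness machinery of Sections~\ref{sec:stab}--\ref{sec:exact} is not needed here: it suffices to produce a tight construction and a matching flag algebra certificate.

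For the lower bound I would take the balanced blow-up of $\vec{C}_{k+1}$ on $n$ vertices and first check it lies in $\mathcal{\vec T}$: it contains no directed triangle whatsoever, since within a part there is no edge, and three vertices in three distinct parts span a triangle only if those parts are pairwise adjacent in $\vec{C}_{k+1}$, which never happens when $k+1\ge 4$; in particular there is no $\vec{T}_3$. Next I would count induced copies of $\vec{P}_k$: because no part induces an edge, the $k$ vertices of any induced $\vec{P}_k$ lie in $k$ distinct parts, and the adjacency pattern of $\vec{P}_k$ forces these to be $k$ consecutive parts around the cycle in a unique orientation. Hence the number of induced $\vec{P}_k$'s is $(k+1)\left(\frac{n}{k+1}\right)^k(1+o(1))=\frac{n^k}{(k+1)^{k-1}}(1+o(1))$, so $d_{\vec{P}_k}$ of this graph tends to $\frac{k!}{(k+1)^{k-1}}$, giving $I(\vec{P}_k,\mathcal{\vec T})\ge \frac{k!}{(k+1)^{k-1}}$.

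For the upper bound I would run the plain flag algebra semidefinite computation over the class of $\vec{T}_3$-free oriented graphs, maximizing the density of $\vec{P}_k$, exactly as in the proof of Lemma~\ref{lem:FA}: for $\vec{P}_5$ with flags on $5$ vertices and a few small types, for $\vec{P}_6$ with flags on $6$ vertices. Solving the resulting SDP numerically and then rounding the solution matrices to exact rationals, as in~\cite{bib-balogh15}, should certify $d_{\vec{P}_k}(G)\le \frac{k!}{(k+1)^{k-1}}$ for every $\vec{T}_3$-free oriented graph $G$ in the limit, matching the construction and closing the gap. Note that, unlike the unrestricted problem reported in the Conclusion, the restriction to $\mathcal{\vec T}$ makes the extremal object a non-iterated blow-up, so one expects the relaxation value to coincide with the construction value exactly.

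The main obstacle will be the size and the rounding of the flag algebra computation, particularly for $\vec{P}_6$: the number of $\vec{T}_3$-free oriented graphs on $6$ vertices is large, the SDP is correspondingly big, and extracting a clean exact rational certificate that lands precisely on $\frac{6!}{7^5}$ (rather than slightly above) requires a careful choice of types and, likely, enforcing block structure in the rounded matrices so that complementary slackness is consistent with the blow-up of $\vec{C}_{k+1}$. A secondary check is to verify that the optimum of the relaxation is witnessed by that blow-up and not by some other $\vec{T}_3$-free configuration, which one confirms from the support of the rounded dual solution.
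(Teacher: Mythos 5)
Your proposal matches the paper's approach: the paper obtains these two values exactly by the balanced blow-up of $\vec{C}_{k+1}$ as the lower-bound construction together with a plain flag algebra computation over $\vec{T}_3$-free oriented graphs whose numerical SDP solution is rounded to an exact certificate (as in Lemma~\ref{lem:FA}), which succeeded for $\vec{P}_5$ and $\vec{P}_6$ but not $\vec{P}_7$. Your lower-bound count and the acknowledged rounding difficulty are both consistent with what the authors did, so this is essentially the same proof.
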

For $\vec P_7$, we compute a numerical upper bound matching Conjecture~\ref{conjP4T3} when $n \to \infty$. 
This means that we  successfully rounded numerical solution by flag algebras for $\vec P_5$ and $\vec P_6$, but fell short to do so for $\vec P_7$. 
We expect that the approach we used for $\vec{P}_4$ in this paper could also work for stability and exactness of $\vec{P}_5$ and $\vec{P}_6$.

\section*{Acknowledgements}

We would like to thank James Cummings for helping to  verify that Theorem~6 in~\cite{AroskarCummings} indeed implies Lemma~\ref{lem:InfRem}.

\bibliographystyle{abbrv}
\bibliography{refs.bib}

\end{document}